\newtheorem{theorem}{Theorem}[section]
\newtheorem{prop}{Proposition}
\newtheorem{corollary}{Corollary}[section]
\numberwithin{equation}{section}
\newtheorem{remark}{Remark}[section]
\begin{document}

\title{An Extension of the Generalized Linear Failure Rate Distribution}
\author{  M. R. Kazemi\thanks{Corresponding: kazemi@fasau.ac.ir} , A. A. Jafari$^{\dag}$, S. Tahmasebi$^{\ddag}$  \\
{\small $^*$Department of Statistics, Fasa University, Fasa, Iran}\\
{\small $^\dag$Department of Statistics, Yazd University, Yazd, Iran}\\
{\small $^\ddag$Department of Statistics, Persian Gulf University, Bushehr, Iran}\\
}
\date{}
\maketitle
\begin{abstract}
In this paper, we introduce a new extension of the generalized linear failure rate distributions. It includes some well-known lifetime distributions such as extension of generalized exponential and generalized linear failure rate distributions as special sub-models. In addition, it can have a constant, decreasing, increasing, upside-down bathtub (unimodal), and bathtub-shaped hazard rate function depending on its parameters. We provide some of its statistical properties such as moments, quantiles, skewness, kurtosis, hazard rate function, and reversible hazard rate function. The maximum likelihood estimation of the parameters is also discussed. At the end, a real data set is given to illustrate the usefulness of this new distribution in analyzing lifetime data.

\noindent {\bf Keywords}: Generalized exponential distribution; Hazard function; Maximum likelihood estimation.
\end{abstract}

\section{Introduction}

The generalized linear failure rate (GLFR) distribution is defined by
\cite{sa-ku-09}
and contains various well-known distributions: the generalized exponential (GE) distribution introduced by
\cite{gu-ku-99},
the generalized Rayleigh distribution introduced by
\cite{su-pa-01,su-pa-05},
the exponential, Rayleigh, and linear failure rate (LFR) distributions are its special cases. The GLFR distribution has decreasing or unimodal probability density function (pdf) and its hazard rate function (hrf) can have increasing, decreasing, and bathtub-shaped. Unfortunately, the GLFR distribution cannot have unimodal hrf.

 Recently,  many studies have been done on GLFR distribution, and some authors have extended it: the generalized linear exponential \citep{Ma-al-10}, beta-linear failure rate \citep{ja-ma-2012}, Kumaraswamy-GLFR \citep{elbatal-13}, modified-GLFR \citep{jamkhaneh-14}, McDonald-GLFR \citep{el-me-ma-14}, Poisson-GLFR \citep{co-or-le-15}, GLFR-geometric \citep{na-sh-re-14},  and GLFR-power series \citep{al-sh-14} are some univariate extension of GLFR distribution. \nocite{ma-ja-15}

\cite{ku-gu-11}
proposed an extension of GE distribution that is a very flexible family of distribution. It is positively skewed, and has increasing, decreasing, unimodal and bathtub shaped hrf’s. It is included GE, exponential, generalized Pareto
\citep{jo-ko-ba-95-2},
and Pareto distributions.

\cite{co-ha-or-pa-12} introduced a five-parameter called the McDonald extended exponential distribution  as a generalization of extended generalized exponential (EGE).   In this paper, we introduce a new four-parameter distribution that contains the EGE distribution as its special case. In addition, this class of distribution extends the three-parameter GLFR distribution.  Therefore, it is called extended  generalized linear failure rate (EGLFR) distribution. The hrf of this new distribution is increasing, decreasing, bathtub, and unimodal. In addition, we will show that the new distribution has been fit better than the EGE distribution and other competing distributions to analyzing the lifetime data.

The paper is organized as follows. In Section \ref{sec.class}, we introduce a new class of distributions. Some statistical properties such as moments, quantiles, hrf, and reversible hazard rate are provided in Section \ref{sec.pro}. The maximum likelihood estimation (MLE) of the parameters is obtained in Section \ref{sec.mle}. An application of the EGLFR distribution using a real data set is presented in Section \ref{sec.ex}.

\section{A new class}
\label{sec.class}
 In this section, we introduce a new class of distributions as extension of GLFR distribution.  Also, some properties of the pdf and hrf of this distribution  are given here.

\begin{theorem}
For given  $\alpha>0$, $\beta\in \mathbb{R} $, $a\geq 0$, and $b\geq 0$ (with $a+b>0$), consider the function
\begin{equation}\label{eq.FEGL}
F(x)=\left\{ \begin{array}{ll}
{\left(1-{\left(1-\beta  (ax+\frac{b}{2}x^2)\right)}^{{1}/{\beta }}\right)}^{\alpha } &   {\rm if}\ \ \  \beta \ne 0 \\
{\left(1-{\rm e}^{-(ax+\frac{b}{2}x^2)}\right)}^{\alpha } &                                      {\rm if}\ \  \  \beta =0. \end{array}
\right.
\end{equation}
\noindent i. If $\beta \le 0$,  then $F(x)$ is a cumulative distribution function (cdf) on  $(0,\infty )$.

\noindent ii. If $\beta > 0$ and $b\ne 0$, then $F(x)$ is a cdf on  $(0, \psi )$ where  $\psi =\frac{1}{b}\sqrt{a^2+\frac{2b}{\beta }}-\frac{a}{b}$.

\noindent iii. If $\beta > 0$ and $b= 0$, then $F(x)$ is a cdf on  $(0, \psi )$ where $\psi =\frac{1}{a\beta}$.
\end{theorem}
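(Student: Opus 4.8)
The plan is to verify, in each of the three cases, that $F$ satisfies the defining properties of a cdf on the stated interval: $F$ is continuous and nondecreasing, with $F$ tending to $0$ at the left endpoint and to $1$ at the right endpoint. Since $F$ is obtained as $G(x)^\alpha$ where $G(x)=1-(1-\beta u(x))^{1/\beta}$ (or $G(x)=1-e^{-u(x)}$ when $\beta=0$) and $u(x)=ax+\tfrac b2 x^2$, and since $t\mapsto t^\alpha$ is continuous and strictly increasing on $[0,1]$ for $\alpha>0$, it suffices to show that $G$ itself is a cdf on the stated interval; the exponentiation by $\alpha$ then only reshapes the profile without disturbing monotonicity or the boundary values $0$ and $1$. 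So I would reduce everything to studying $G$.

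First I would record the elementary facts about $u(x)=ax+\tfrac b2 x^2$: since $a,b\ge 0$ with $a+b>0$, $u$ is continuous, $u(0)=0$, and $u$ is strictly increasing on $[0,\infty)$ (its derivative $a+bx$ is positive there), with $u(x)\to\infty$ as $x\to\infty$. For the $\beta=0$ case this immediately gives that $G(x)=1-e^{-u(x)}$ is continuous, strictly increasing, $G(0)=0$, and $G(x)\to 1$ as $x\to\infty$, so $F=G^\alpha$ is a cdf on $(0,\infty)$; this disposes of part (i) when $\beta=0$. For part (i) with $\beta<0$, write $-\beta=|\beta|>0$ so that $1-\beta u(x)=1+|\beta|u(x)\ge 1>0$ for all $x\ge 0$, hence $(1-\beta u(x))^{1/\beta}=(1+|\beta|u(x))^{-1/|\beta|}$ is well defined, continuous, strictly decreasing in $x$ (as $u$ increases and the exponent $-1/|\beta|$ is negative), equals $1$ at $x=0$, and tends to $0$ as $x\to\infty$; therefore $G(x)=1-(1+|\beta|u(x))^{-1/|\beta|}$ runs continuously and strictly increasingly from $0$ to $1$, and again $F=G^\alpha$ is a cdf on $(0,\infty)$.

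For parts (ii) and (iii), $\beta>0$, and the key point is that $1-\beta u(x)$ must stay in $(0,1]$ for the expression $(1-\beta u(x))^{1/\beta}$ to be a legitimate base; this forces $0\le u(x)<1/\beta$, i.e. $x$ must lie to the left of the point where $u(x)=1/\beta$. Solving $ax+\tfrac b2 x^2 = 1/\beta$ for $x>0$ gives exactly $\psi=\tfrac1b\sqrt{a^2+\tfrac{2b}{\beta}}-\tfrac ab$ when $b\ne 0$ (the positive root of the quadratic), and $\psi=1/(a\beta)$ when $b=0$ (the linear equation $ax=1/\beta$); this is where the two formulas for $\psi$ come from, so I would simply check that $u(\psi)=1/\beta$ by direct substitution. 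On $(0,\psi)$ we then have $1-\beta u(x)\in(0,1)$ decreasing to $0$ as $x\uparrow\psi$, so $(1-\beta u(x))^{1/\beta}$ decreases from $1$ to $0$, whence $G(x)=1-(1-\beta u(x))^{1/\beta}$ increases continuously from $G(0)=0$ to $\lim_{x\uparrow\psi}G(x)=1$; once more $F=G^\alpha$ inherits these properties and is a cdf on $(0,\psi)$.

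The argument is essentially a sequence of routine monotonicity and limit checks, so there is no deep obstacle; the one place that needs a little care is confirming that $\psi$ as given is precisely the right endpoint — that is, that $u$ maps $(0,\psi)$ onto $(0,1/\beta)$ bijectively and that $u(\psi)=1/\beta$ exactly — which amounts to solving the quadratic (or linear) equation correctly and checking the root is positive. I would also note in passing that at $x=\psi$ in case (ii)/(iii) one has $F(\psi)=1$, so the distribution has bounded support $(0,\psi]$, and that nothing in the argument requires $\alpha\le 1$ or $\alpha\ge 1$ — the exponent $\alpha>0$ is handled uniformly because $t\mapsto t^\alpha$ is a homeomorphism of $[0,1]$ onto itself.
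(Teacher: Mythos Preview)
Your proposal is correct and follows essentially the same route as the paper: reduce to the case $\alpha=1$ (the paper simply says ``without loss of generality,'' while you justify it via the monotonicity of $t\mapsto t^\alpha$ on $[0,1]$), then verify that $G$ is nondecreasing with the correct boundary limits in each of the three parameter regimes. The only cosmetic difference is that the paper checks monotonicity by computing $F'(x)=(a+bx)(1-\beta z)^{1/\beta-1}>0$ directly, whereas you argue via composition of monotone maps; your treatment of where $\psi$ comes from is more explicit than the paper's, but the underlying argument is the same.
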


\begin{proof}
When $\beta=0$, proof is obvious. We consider $\beta\neq0$. Without loss of generality, we consider $\alpha=1$. Therefore,
$\lim_{x\rightarrow 0} F(x) =0$ and $F'(x)=(a+bx)(1-\beta(ax+\frac{b}{2}x^2))^{\frac{1}{\beta}-1}$.

\noindent i. When $\beta<0$, $\lim_{x\rightarrow \infty} F(x)=1$ and $F'(x)>0$  for  $0<x<\infty$.

\noindent ii. When $\beta>0$, $\lim_{x\rightarrow \psi} F(x)=1$  and $F'(x)>0$  for $0<x<\psi$.

\noindent iii. This part is similar to part ii.\\
Therefore, proof is completed.
\end{proof}
%

When the function $F$ in \eqref{eq.FEGL} is a cdf, it is said EGLFR distribution with parameters $\alpha$, $\beta$, $a$ and $b$, and will be denoted by ${\rm EGLFR}(\alpha,\beta,a,b)$.  The pdf of this new class of distributions is
\begin{equation}\label{eq.fEGL}
f\left(x;\alpha ,\beta ,a,b\right)=\left\{ \begin{array}{ll}
\alpha \left(a+bx\right){\left(1-\beta  z\right)}^{\frac{1}{\beta }-1}{\left(1-{\left(1-\beta  z\right)}^{{1}/{\beta }}\right)}^{\alpha -1}
& {\rm if}\ \ \ \ \beta \ne 0 \\
\alpha \left(a+bx\right){\rm e}^{-z}{\left(1-{\rm e}^{-z}\right)}^{\alpha -1}
& {\rm if}\ \ \ \  \beta =0, \end{array}
\right.
\end{equation}
where $z=ax+\frac{b}{2}x^2$. The plots for pdf of EGLFR distribution are given in Figure \ref{fig.den}, for some different values of parameters.

\begin{figure}[ht]
\centering
\includegraphics[width=7.7cm,height=7.7cm]{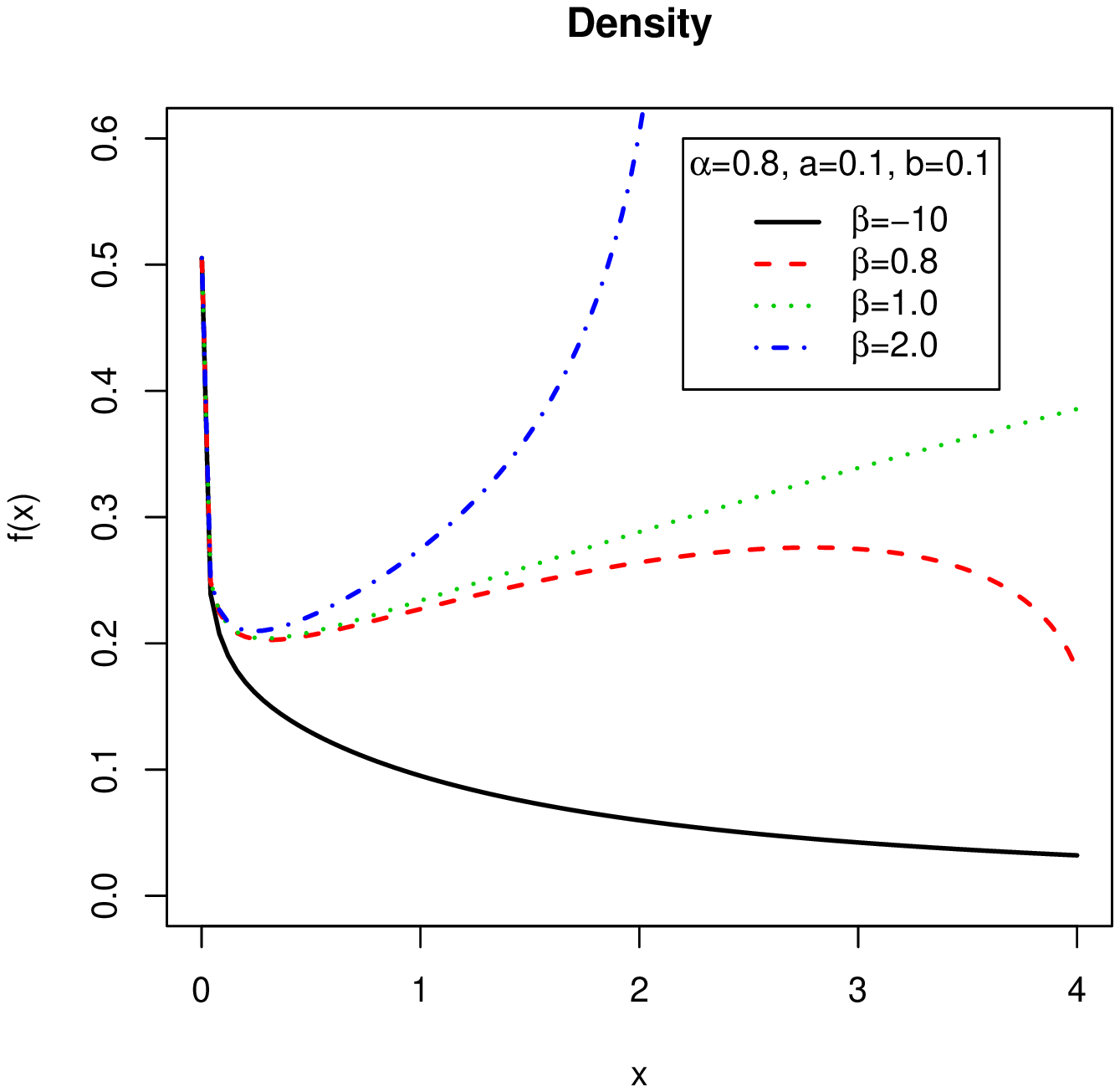}
\includegraphics[width=7.7cm,height=7.7cm]{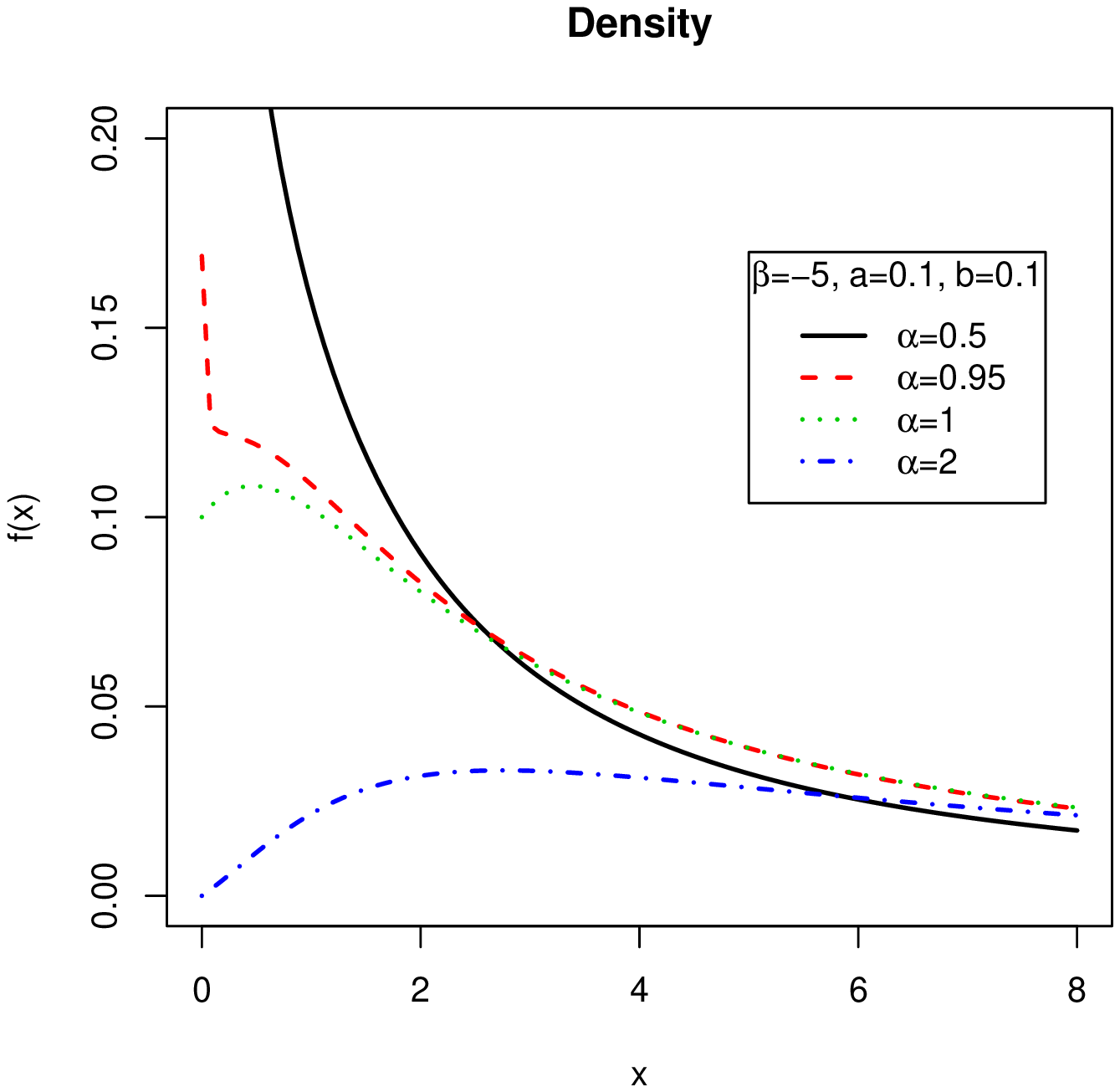}
\vspace{-0.9cm}
\caption{The plot of pdf for some different values of parameters. }\label{fig.den}
\end{figure}

If $\beta =0$, then this distribution reduces to the GLFR distribution which is introduced by
\cite{sa-ku-09}.
The GLFR includes the GE (if $b=0$), exponential (if $b=0$, $\alpha =1$), LFR (if $\alpha =1$), generalized Rayleigh (if $a=0$) and Rayleigh (if $a=0$, $\alpha =1$) distributions as its special sub-models.

If $b=0$ then EGLFR distribution reduces to the EGE distribution introduced by
\cite{ku-gu-11}.
The EGE distribution includes GE (if $\beta =0$), exponential (if $\beta =0$, $\alpha =1$), generalized Pareto
(if $\alpha =1$), and Pareto distributions (if $\alpha =1$, $\beta <0$).

 If  $a=0$, we have an extension of
two-parameter Burr X distribution which is introduced by
\cite{su-pa-05}
 and also is known as
generalized Rayleigh distribution
\citep{ku-ra-05}.
 Therefore,  it is called extended generalized Rayleigh (EGR) distribution.

\begin{prop}\label{prop.1}
If $X$ has a EGLFR distribution with the pdf in \eqref{eq.fEGL}, then $G\left(X\right)=aX+\frac{b}{2}X^2$ has a EGE distribution with parameters $\alpha $, $\beta $, and $1$ or equivalently, $Y^*=X+\frac{b}{2a}X^2$ has a EGE distribution with parameters $\alpha $, $\beta $, and $a$.
\end{prop}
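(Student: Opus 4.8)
The plan is to prove this by a direct change-of-variables (transformation) argument. First I would recall that the EGE distribution with parameters $\alpha$, $\beta$, and scale $1$ has cdf $H(u) = \left(1 - (1-\beta u)^{1/\beta}\right)^{\alpha}$ on the appropriate support (this is exactly \eqref{eq.FEGL} with $b=0$ and $a=1$, as noted right after the theorem where the $b=0$ reduction to the EGE distribution is stated). The key observation is that the cdf $F$ of $X$ in \eqref{eq.FEGL} is literally $F(x) = H\bigl(ax + \tfrac{b}{2}x^2\bigr)$, i.e.\ $F = H \circ G$ where $G(x) = ax + \tfrac{b}{2}x^2$.

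Next I would verify that $G$ is a strictly increasing bijection from the support of $X$ onto the support of the target EGE law, so that the composition can be inverted cleanly. On $(0,\infty)$ — or on $(0,\psi)$ in the $\beta>0$ case — we have $G'(x) = a + bx > 0$ (using $a,b\ge 0$, $a+b>0$, and $x>0$), so $G$ is strictly increasing with $G(0)=0$; and $G$ maps the support of $X$ onto the support of the EGE$(\alpha,\beta,1)$ distribution (for $\beta>0$ one checks $G(\psi)$ equals the right endpoint $1/\beta$ of the EGE support, which is immediate from the formula for $\psi$ in part ii of the theorem). Then for $U = G(X)$ one computes $P(U \le u) = P\bigl(X \le G^{-1}(u)\bigr) = F\bigl(G^{-1}(u)\bigr) = H(u)$, which is precisely the EGE$(\alpha,\beta,1)$ cdf; this proves the first assertion. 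Alternatively, and perhaps more transparently for the reader, one can differentiate: the pdf of $U$ is $f\bigl(G^{-1}(u)\bigr)/G'\bigl(G^{-1}(u)\bigr) = f(x)/(a+bx)$ with $x = G^{-1}(u)$, and reading off \eqref{eq.fEGL} this is exactly $\alpha(1-\beta u)^{1/\beta - 1}\bigl(1-(1-\beta u)^{1/\beta}\bigr)^{\alpha-1}$, the EGE$(\alpha,\beta,1)$ density.

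For the second (equivalent) statement, note that $Y^* = X + \tfrac{b}{2a}X^2 = \tfrac{1}{a}G(X) = \tfrac{1}{a}U$, so $Y^*$ is just a rescaling of $U$ by $1/a$; since scaling an EGE$(\alpha,\beta,1)$ variable by $1/a$ produces an EGE$(\alpha,\beta,a)$ variable (the scale parameter enters only through the product with $x$ in \eqref{eq.FEGL}), the claim follows immediately from the first part. I do not anticipate a serious obstacle here: the whole argument is bookkeeping about supports and a one-line monotone transformation. The only point requiring a little care is confirming that $G$ maps the support of $X$ onto the support of the target EGE distribution in the $\beta>0$ cases (parts ii and iii of the theorem) — in particular matching the endpoint $\psi$ with $1/\beta$ (respectively with $1/(a\beta)$ before rescaling) — but this is a direct substitution into the closed-form expressions already given.
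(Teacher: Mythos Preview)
Your argument is correct and is exactly the natural justification: the EGLFR cdf in \eqref{eq.FEGL} is visibly $H\circ G$ with $H$ the EGE$(\alpha,\beta,1)$ cdf and $G(x)=ax+\tfrac{b}{2}x^2$ strictly increasing on the support, so $G(X)$ has cdf $H$; the rescaling to get $Y^*$ is immediate. The paper itself states this proposition without proof (treating it as evident from the form of \eqref{eq.FEGL}), so there is nothing further to compare.
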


\begin{prop} \label{prop.elfr} Suppose that $X$ has the following cdf
\[F_1\left(x\right)=\left\{ \begin{array}{ll}
1-{\left(1-\beta  z\right)}^{{1}/{\beta }} & {\rm if }\ \ \ \ \beta \ne 0 \\
1-{\rm e}^{-z} & {\rm if}\ \ \ \ \beta =0, \end{array}
\right.\]
where $z=ax+\frac{b}{2}x^2$. We say that $X$ has extended linear failure rate (ELFR) distribution. The maximum of a random sample with size $n$ from the ELFR distribution has ${\rm EGLFR}(n,\beta ,a,b)$. Therefore, the ${\rm EGLFR}(n,\beta ,a,b)$  provides the cdf of a parallel system when each component has the ELFR distribution.
\end{prop}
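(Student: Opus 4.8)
The plan is to verify the two claims in Proposition \ref{prop.elfr} directly from the definitions: first that the ELFR cdf $F_1$ is a genuine cdf (on the appropriate support inherited from Theorem~2.1 with $\alpha=1$), and then that the maximum of an i.i.d.\ sample of size $n$ from $F_1$ has cdf $(F_1(x))^n$, which we recognize as exactly the $\mathrm{EGLFR}(n,\beta,a,b)$ cdf in \eqref{eq.FEGL}.

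First I would note that $F_1$ is precisely the function $F$ of \eqref{eq.FEGL} in the special case $\alpha=1$: indeed $\bigl(1-(1-\beta z)^{1/\beta}\bigr)^{1}=1-(1-\beta z)^{1/\beta}$ and $\bigl(1-e^{-z}\bigr)^1=1-e^{-z}$, with $z=ax+\tfrac b2 x^2$ as before. Hence, by Theorem~2.1 (applied with $\alpha=1$), $F_1$ is a valid cdf on $(0,\infty)$ when $\beta\le 0$, on $(0,\psi)$ with $\psi=\tfrac1b\sqrt{a^2+2b/\beta}-\tfrac ab$ when $\beta>0$ and $b\ne0$, and on $(0,\psi)$ with $\psi=1/(a\beta)$ when $\beta>0$ and $b=0$. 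So no new analytic work is needed for well-definedness; it is inherited.

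Next I would carry out the order-statistic computation. Let $X_1,\dots,X_n$ be i.i.d.\ with cdf $F_1$, and set $X_{(n)}=\max\{X_1,\dots,X_n\}$. Then for any real $x$ in the support,
\[
P\bigl(X_{(n)}\le x\bigr)=P\bigl(X_1\le x,\dots,X_n\le x\bigr)=\prod_{i=1}^{n}P(X_i\le x)=\bigl(F_1(x)\bigr)^n.
\]
Substituting the two cases of $F_1$ gives $\bigl(1-(1-\beta z)^{1/\beta}\bigr)^{n}$ when $\beta\ne0$ and $\bigl(1-e^{-z}\bigr)^{n}$ when $\beta=0$, with $z=ax+\tfrac b2 x^2$. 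Comparing with \eqref{eq.FEGL}, this is exactly $F(x)$ with the shape parameter $\alpha$ replaced by $n$; that is, $X_{(n)}\sim\mathrm{EGLFR}(n,\beta,a,b)$. The final sentence about parallel systems is then just the standard reliability interpretation: a parallel system of $n$ components functions as long as at least one component functions, so its lifetime equals $X_{(n)}$, and hence its lifetime distribution is $\mathrm{EGLFR}(n,\beta,a,b)$ when each component lifetime is ELFR.

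There is essentially no hard step here; the only point requiring a word of care is making explicit that $F_1$ coincides with \eqref{eq.FEGL} at $\alpha=1$ (so that Theorem~2.1 guarantees $F_1$ is a cdf on the stated support, and so that the support of $X_{(n)}$ matches that of the target $\mathrm{EGLFR}(n,\beta,a,b)$ distribution), together with the fact that the parameters $n$ and $\alpha$ play identical roles as the exponent in the cdf. I would therefore keep the proof to a few lines: identify $F_1=F|_{\alpha=1}$, invoke Theorem~2.1, compute $P(X_{(n)}\le x)=F_1(x)^n$, and read off the result from \eqref{eq.FEGL}.
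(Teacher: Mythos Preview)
Your argument is correct and is exactly the natural one-line verification: $F_1$ is \eqref{eq.FEGL} with $\alpha=1$, so Theorem~2.1 gives that it is a cdf on the appropriate support, and then $P(X_{(n)}\le x)=F_1(x)^n$ by independence coincides with \eqref{eq.FEGL} at $\alpha=n$. The paper itself states Proposition~\ref{prop.elfr} without proof, treating it as an immediate observation, so there is no alternative approach to compare against; your write-up simply makes explicit the routine check the authors left implicit.
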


\begin{theorem}\label{eq.flim}
Let $f(x)$ be the pdf of the EGLFR distribution. The limiting behavior of $f(x)$ for different values of its parameters is given bellow:
\[{\mathop{\lim }_{x\to 0^+} f\left(x\right) }=\left\{ \begin{array}{ll}
0 \ \ & {\rm if\ \  }\alpha >1 \\
a \ \ & {\rm if}\ \  \alpha =1 \\
\infty \ \ & {\rm if}\ \  \alpha <1 \end{array}
\right.{\rm \ \ \ and\ \ \ \ }{\mathop{\lim }_{x\to c^-} f\left(x\right) }=\left\{ \begin{array}{ll}
\infty \ \ & {\rm if\ \  }\beta \geq 1 \\
0 \ \ & {\rm if}\ \  \beta <1, \end{array}
\right.\]
where $c=\psi $ for $\beta >0$, and $c=\infty $ for $\beta <0$.
\end{theorem}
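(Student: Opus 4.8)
The plan is to reduce the whole statement to the boundary behaviour of two elementary factors. Writing $z=ax+\tfrac b2x^2$ and comparing \eqref{eq.FEGL} with the cdf $F_1$ of the ELFR distribution from Proposition~\ref{prop.elfr}, one sees that in every case $F(x)=[F_1(x)]^{\alpha}$, so differentiating gives
\[
f(x)=\alpha\,[F_1(x)]^{\alpha-1}\,f_1(x),\qquad
f_1(x)=
\begin{cases}
(a+bx)(1-\beta z)^{1/\beta-1} & \beta\ne 0,\\
(a+bx){\rm e}^{-z} & \beta=0,
\end{cases}
\]
where $f_1=F_1'$ is the ELFR density (this is the continuous-$\alpha$ counterpart of the parallel-system remark in Proposition~\ref{prop.elfr}). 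Thus it suffices to compute the one-sided limits of $F_1$ and of $f_1$ at the two endpoints and multiply. Observe in advance that $\alpha$ enters the answer only through $[F_1(x)]^{\alpha-1}$, which will produce the trichotomy at $0$, whereas $\beta$ enters only through $f_1$, which will produce the dichotomy at $c$.

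\emph{The left endpoint.} As $x\to0^+$ we have $z\to0$, hence $(1-\beta z)^{1/\beta}\to1$ and ${\rm e}^{-z}\to1$, so $F_1(x)\to0$ while $f_1(x)\to a$ (both formulas for $f_1$ tend to $a$). Substituting into $f=\alpha[F_1]^{\alpha-1}f_1$: if $\alpha=1$ the middle factor is identically $1$ and $f(x)\to a$; if $\alpha>1$ then $[F_1(x)]^{\alpha-1}\to0$, so $f(x)\to0$; if $\alpha<1$ then $\alpha-1<0$, so $[F_1(x)]^{\alpha-1}\to\infty$ and $f(x)\to\infty$. (Here I would note that the last line uses $a>0$; when $a=0$, i.e.\ the EGR sub-model, $F_1(x)\sim\tfrac b2x^2$ and the $x\to0^+$ behaviour changes — worth a remark, but outside the main case.)

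\emph{The right endpoint.} First I would check $F_1(x)\to1$ in each regime for $\beta$: for $\beta=0$ and $\beta<0$ we have $c=\infty$ and $z\to\infty$, so ${\rm e}^{-z}\to0$, respectively $(1-\beta z)^{1/\beta}=(1+|\beta|z)^{-1/|\beta|}\to0$; for $\beta>0$, $c=\psi$ is by construction the point at which $\beta z=1$, so $1-\beta z\to0^+$ and $(1-\beta z)^{1/\beta}\to0$. In all cases $[F_1(x)]^{\alpha-1}\to1$, so $f(x)\sim\alpha f_1(x)$ and the problem collapses to $\lim_{x\to c^-}f_1(x)$. For $\beta=0$, $f_1(x)=(a+bx){\rm e}^{-z}\to0$. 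For $\beta<0$, the exponent $\tfrac1\beta-1$ of $(1+|\beta|z)$ in $f_1$ is $<-1$, so $f_1(x)\le{\rm const}\cdot(a+bx)/z$, which tends to $0$ because $z$ is at least linear in $x$ while $a+bx$ is at most linear. For $\beta>0$, $a+bx\to a+b\psi\in(0,\infty)$ while $1-\beta z\to0^+$, so the limit of $f_1$ is decided by the sign of $\tfrac1\beta-1$: it equals $0$ when $0<\beta<1$ and $+\infty$ when $\beta>1$ (the single value $\beta=1$ is borderline, with $f_1\to a+b\psi$ finite). Combining the factors gives $f(x)\to0$ for $\beta<1$ and $f(x)\to\infty$ for $\beta\ge1$ (with the one borderline caveat just mentioned), as claimed.

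\emph{Expected main obstacle.} Everything above is routine except the sub-case $\beta>0$ at the \emph{finite} endpoint $\psi$, which is the only delicate point. There I must (i) use the defining property of $\psi$ to be certain that $1-\beta z$ tends to $0$ \emph{from above}, so that a negative power genuinely diverges rather than being undefined or oscillatory, and (ii) verify that the companion factor $[\,1-(1-\beta z)^{1/\beta}\,]^{\alpha-1}=[F_1(x)]^{\alpha-1}$ stays bounded and bounded away from $0$ — which it does, since its inner expression tends to $1$ — so that it cannot cancel the blow-up of $(1-\beta z)^{1/\beta-1}$. Once these two points are settled, the product rule finishes the proof.
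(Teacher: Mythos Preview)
Your argument is correct and far more detailed than the paper's own proof, which consists of the single sentence ``The proof is obvious.'' The factorisation $f=\alpha\,[F_1]^{\alpha-1}f_1$ you employ is exactly the natural way to make that ``obvious'' precise, so there is nothing substantive to compare against. Two of your side remarks are worth keeping: the $a=0$ caveat at the left endpoint is real (for the EGR sub-model the $x\to0^+$ behaviour changes), and your flag at $\beta=1$ is justified, since there $(1-\beta z)^{1/\beta-1}\equiv1$ and hence $f(x)\to\alpha(a+b\psi)\in(0,\infty)$ rather than $+\infty$; the printed statement is slightly imprecise at that single boundary value.
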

\begin{proof}
The proof is obvious.
\end{proof}

\begin{theorem}
Let $f(x)$ be the pdf of the EGLFR distribution. Then,

\noindent i. the mode of $f(x)$ is obtained from the solution of the following nonlinear equation when $\alpha\geq 1$, $\beta<1$ and $\beta \neq 0$:
$$\frac{b}{a+bx}+\frac{(\beta-1)(a+bx)}{1-\beta  (ax+\frac{b}{2}x^2)}+(\alpha -1)
\frac{(a+bx)\left(1-\beta (ax+\frac{b}{2}x^2)\right)^{\frac{1}{\beta}-1}}{1-{\left(1-\beta  (ax+\frac{b}{2}x^2)\right)^{{1}/{\beta }}}}=0,$$

\noindent ii. the mode of $f(x)$ is obtained from the solution of the following nonlinear equation when $\alpha\geq 1$ and $\beta = 0$:
$$\frac{b}{a+bx}-(a+bx)+(\alpha -1)\frac{a+bx}{{\rm e}^{ax+\frac{b}{2}x^2}-1}=0,$$

\noindent iii.  the mode of $f(x)$ is 0 when $\alpha<1$ and $\beta<1$,

\noindent iv.  $f(x)$ has two modes at 0 and  $\psi$ when $\alpha<1$ and $\beta\geq1$.
\end{theorem}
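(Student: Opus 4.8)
The plan is to analyze the sign of the derivative $f'(x)$ (equivalently, the derivative of $\log f(x)$ where $f$ is positive) on the support. Writing $z = ax + \frac{b}{2}x^2$ and using \eqref{eq.fEGL}, take logarithms (for $\beta \neq 0$) to get $\log f(x) = \log\alpha + \log(a+bx) + (\tfrac{1}{\beta}-1)\log(1-\beta z) + (\alpha-1)\log\bigl(1-(1-\beta z)^{1/\beta}\bigr)$, and differentiate; setting $(\log f)'(x)=0$ produces exactly the nonlinear equation in part i (the $\beta=0$ case in part ii is the analogous computation with $e^{-z}$ replacing $(1-\beta z)^{1/\beta}$). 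So parts i and ii reduce to verifying that under the stated parameter restrictions any interior critical point is actually a maximum, which I would argue by checking the boundary behavior via Theorem \ref{eq.flim}: when $\alpha \geq 1$ we have $\lim_{x\to 0^+} f(x) \in \{0,a\}$ (finite), and when $\beta < 1$ we have $\lim_{x\to c^-} f(x) = 0$; since $f$ is positive, continuous, and vanishes or stays bounded at both ends, it attains an interior maximum, which must be among the solutions of $(\log f)'=0$.

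For part iii, when $\alpha < 1$ and $\beta < 1$: by Theorem \ref{eq.flim}, $\lim_{x\to 0^+} f(x) = \infty$ (since $\alpha<1$) and $\lim_{x\to c^-} f(x) = 0$ (since $\beta<1$). The strategy is to show $f$ is strictly decreasing on the whole support, so that the "mode" — interpreted as the supremal value, attained in the limit — sits at $0$. I would show $(\log f)'(x) < 0$ throughout: the term $\frac{b}{a+bx}$ is positive but bounded, the term $(\tfrac{1}{\beta}-1)\cdot\frac{-(a+bx)\beta}{1-\beta z} = \frac{(\beta-1)(a+bx)}{1-\beta z}$ is negative since $\beta<1$, and the last term $(\alpha-1)\cdot(\text{positive}) $ is negative since $\alpha<1$; one then checks the negative contributions dominate. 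Alternatively, and more cleanly, note that $\alpha<1$ makes the factor $(1-(1-\beta z)^{1/\beta})^{\alpha-1}$ decreasing and blowing up at $0$, which forces the whole product to be decreasing near $0$; combined with monotone-decrease of the other factors one concludes $f$ is decreasing everywhere. I expect this domination step to be the main obstacle, since for $0<\beta<1$ the sign of the full expression is not immediate and may require bounding $\frac{b}{a+bx}$ against the ELFR-type terms using $z \leq ax+\frac{b}{2}x^2$ and the explicit form of $\psi$.

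For part iv, when $\alpha < 1$ and $\beta \geq 1$ (so the support is the bounded interval $(0,\psi)$): Theorem \ref{eq.flim} gives $\lim_{x\to 0^+} f(x) = \infty$ and $\lim_{x\to \psi^-} f(x) = \infty$. Since $f$ is positive and continuous on $(0,\psi)$ with both endpoints blowing up, $f$ has an interior minimum but its supremum is $+\infty$ attained (in the limiting sense) at both ends — hence the two "modes" at $0$ and $\psi$. I would make this rigorous by noting $(\log f)'$ changes sign from $-$ to $+$ exactly once (one could observe that each of the three summands in $(\log f)'$ is monotone, or at least that their sum has a single sign change), giving a unique interior critical point which is a minimum, so no interior maximum exists and the density is maximal only in the limits at the two boundary points. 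The bounded-support structure from Theorem 2.1(ii) and the boundary asymptotics from Theorem \ref{eq.flim} do essentially all the work here; the only care needed is the single-crossing claim for $(\log f)'$.
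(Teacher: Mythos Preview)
Your approach is essentially the same as the paper's: both rely on the boundary asymptotics of Theorem~\ref{eq.flim} to locate the mode, and your derivation of the critical-point equations in parts~i and~ii via $(\log f)'=0$ is exactly what is intended (the paper simply declares the result ``obvious'' from Theorem~\ref{eq.flim}).

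For parts~iii and~iv you are working harder than necessary. In part~iii, once Theorem~\ref{eq.flim} gives $\lim_{x\to 0^+} f(x)=\infty$, the global mode must be at $0$: no interior point can compete with a density that blows up at the left endpoint, so you do not need to establish global monotonicity of $f$ (and your worry about the ``domination step'' is therefore moot). Likewise in part~iv, $\lim_{x\to 0^+} f(x)=\infty$ and $\lim_{x\to\psi^-} f(x)=\infty$ already force the two modes to sit at $0$ and $\psi$; the single-crossing argument for $(\log f)'$ is unnecessary for the statement as written. The paper's proof is just this observation, with no shape analysis of the interior.
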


\begin{proof}
Using Theorem \ref{eq.flim},  the proof is obvious.
\end{proof}

\begin{theorem}
Let $F$ be the pdf of the EGLFR distribution. Then, $F$ is a heavy-tailed distribution when $\alpha\geq1$ and $\beta<0$.
\end{theorem}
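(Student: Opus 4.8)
The plan is to use the standard criterion that a distribution supported on $(0,\infty)$ with survival function $\bar F = 1-F$ is heavy-tailed precisely when its moment generating function is infinite for every $t>0$, equivalently when $\limsup_{x\to\infty} e^{tx}\bar F(x)=\infty$ for every $t>0$. Thus it is enough to produce a lower bound on $\bar F(x)$ that decays only polynomially in $x$ as $x\to\infty$; once such a bound is in hand, multiplying by $e^{tx}$ and letting $x\to\infty$ finishes the argument.

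Next I would carry out the elementary estimate. Assume $\beta<0$ and write $\beta=-|\beta|$ and $z=ax+\tfrac b2 x^2$, so that $1-\beta z = 1+|\beta| z\ge 1$ on $(0,\infty)$. From \eqref{eq.FEGL},
\[
\bar F(x)=1-\Bigl(1-(1+|\beta| z)^{-1/|\beta|}\Bigr)^{\alpha}.
\]
This is exactly the place where the hypothesis $\alpha\ge 1$ is used: for any $w\in[0,1]$ and any $\alpha\ge 1$ one has $(1-w)^{\alpha}\le 1-w$, hence $1-(1-w)^{\alpha}\ge w$. Applying this with $w=(1+|\beta| z)^{-1/|\beta|}\in[0,1]$ gives the clean bound
\[
\bar F(x)\ \ge\ (1+|\beta| z)^{-1/|\beta|}\qquad\text{for all }x>0 .
\]

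It then remains to observe that the right-hand side decays polynomially: since $z=ax+\tfrac b2 x^2$ is a polynomial in $x$ of degree $2$ when $b>0$ and of degree $1$ when $b=0$, there are constants $c>0$ and $\gamma>0$ (one may take $\gamma=2/|\beta|$ if $b>0$ and $\gamma=1/|\beta|$ if $b=0$) with $(1+|\beta| z)^{-1/|\beta|}\ge c\,x^{-\gamma}$ for all large $x$. Consequently, for every $t>0$, $e^{tx}\bar F(x)\ge c\,x^{-\gamma}e^{tx}\to\infty$ as $x\to\infty$, so the moment generating function of $F$ is infinite on $(0,\infty)$ and $F$ is heavy-tailed. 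I do not expect a genuine obstacle here; the only step requiring care is the inequality $1-(1-w)^{\alpha}\ge w$ for $\alpha\ge 1$, which is what pins down the role of the assumption $\alpha\ge 1$, after which the conclusion is immediate. As an alternative route one could instead invoke Proposition~\ref{prop.1}: the map $x\mapsto G(x)=ax+\tfrac b2x^2$ is increasing on $(0,\infty)$, so $\bar F_X(x)=\bar F_{G(X)}\bigl(G(x)\bigr)$ with $G(X)\sim{\rm EGE}(\alpha,\beta,1)$, and the EGE law with $\beta<0$ has a generalized-Pareto (polynomial) tail, which transfers back through $G$ to a polynomial tail for $X$.
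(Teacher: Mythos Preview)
Your proposal is correct and follows essentially the same route as the paper: both rewrite the survival function via $s=-1/\beta$ and $z=ax+\tfrac b2x^2$, observe that the tail is comparable to a negative power of $z$ (hence of $x$), and then use the criterion that $e^{tx}\bar F(x)\to\infty$ for every $t>0$ to conclude heavy-tailedness. The only noteworthy difference is in how the hypothesis $\alpha\ge 1$ is handled: the paper simply writes ``without loss of generality take $\alpha=1$'' and works with the asymptotic $\bar F(x)\sim(s/z)^s$, whereas you justify this reduction explicitly via the inequality $1-(1-w)^\alpha\ge w$ for $\alpha\ge 1$, which yields a clean lower bound $\bar F(x)\ge(1+|\beta|z)^{-1/|\beta|}$ valid for all $\alpha\ge 1$. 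Your treatment is thus slightly more self-contained at that step, but the argument is the same in substance.
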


\begin{proof}
 Consider $z=ax+\frac{b}{2}x^2$ and $s=-\frac{1}{\beta}$. Therefore,
\[\bar{F}\left(x\right)=1-F(x)=1-{\left(1-{\left(1+\ \frac{z}{s}\right)}^{-s}\right)}^{\alpha }.\]
Without loss of generality take $\alpha=1$. Clearly, we have
$\bar{F}\left(x\right)\sim {\left(\frac{s}{z}\right)}^s$, as $ x\to \infty$. So
\[{\mathop{\lim }_{x\to \infty } {\left(\frac{s}{z}\right)}^s{\rm e}^{\lambda x}=\infty \ \ \ \ {\rm for}\ \forall \lambda >0,\ }\]
and $F$ is a heavy tailed distribution \citep[see][Theorem 2.6]{fo-ko-za-11}.

\end{proof}

The hrf of the EGLFR distribution is
\begin{equation}
h\left(x\right)=\left\{ \begin{array}{ll}
\frac{\alpha \left(a+bx\right){\left(1-\beta  z\right)}^{\frac{1}{\beta }-1}{\left(1-{\left(1-\beta  z\right)}^{{1}/{\beta }}\right)}^{\alpha -1}}{1-{\left(1-{\left(1-\beta  z\right)}^{{1}/{\beta }}\right)}^{\alpha }} &
{\rm if}\ \ \ \  \beta \ne 0 \\
\frac{\alpha \left(a+bx\right){\rm e}^{-z}{\left(1-{\rm e}^{-z}\right)}^{\alpha -1}}{1-{\left(1-{\rm e}^{-z}\right)}^{\alpha }} &
{\rm if}\ \ \ \ \beta =0, \end{array}
\right.
\end{equation}
where $z=ax+\frac{b}{2}x^2$. For some cases of parameters, the plot for hazard of EGLFR distribution are given in Figure \ref{fig.haz}.

\begin{figure}[ht]
\centering
\includegraphics[width=7.7cm,height=7.7cm]{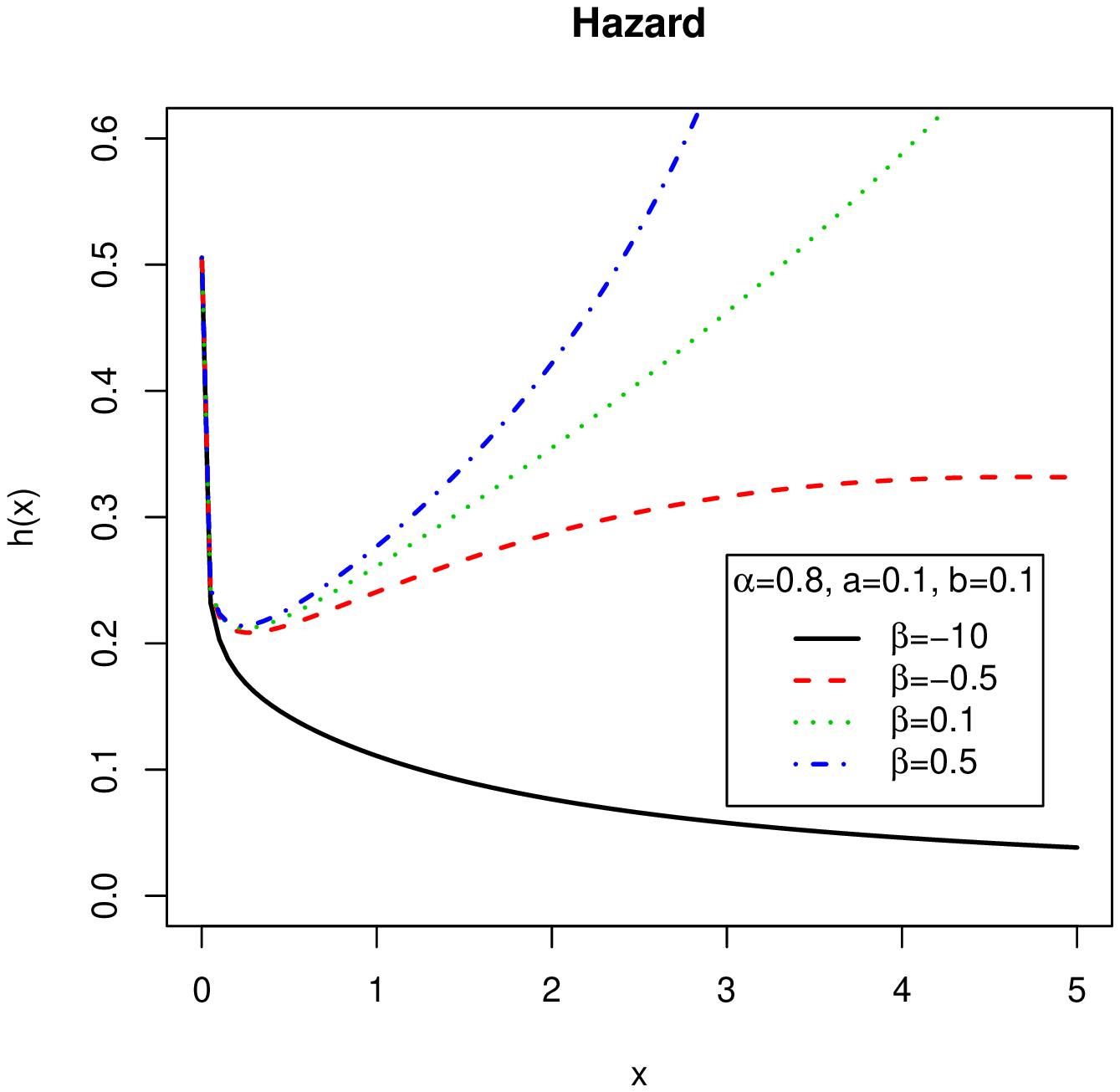}
\includegraphics[width=7.7cm,height=7.7cm]{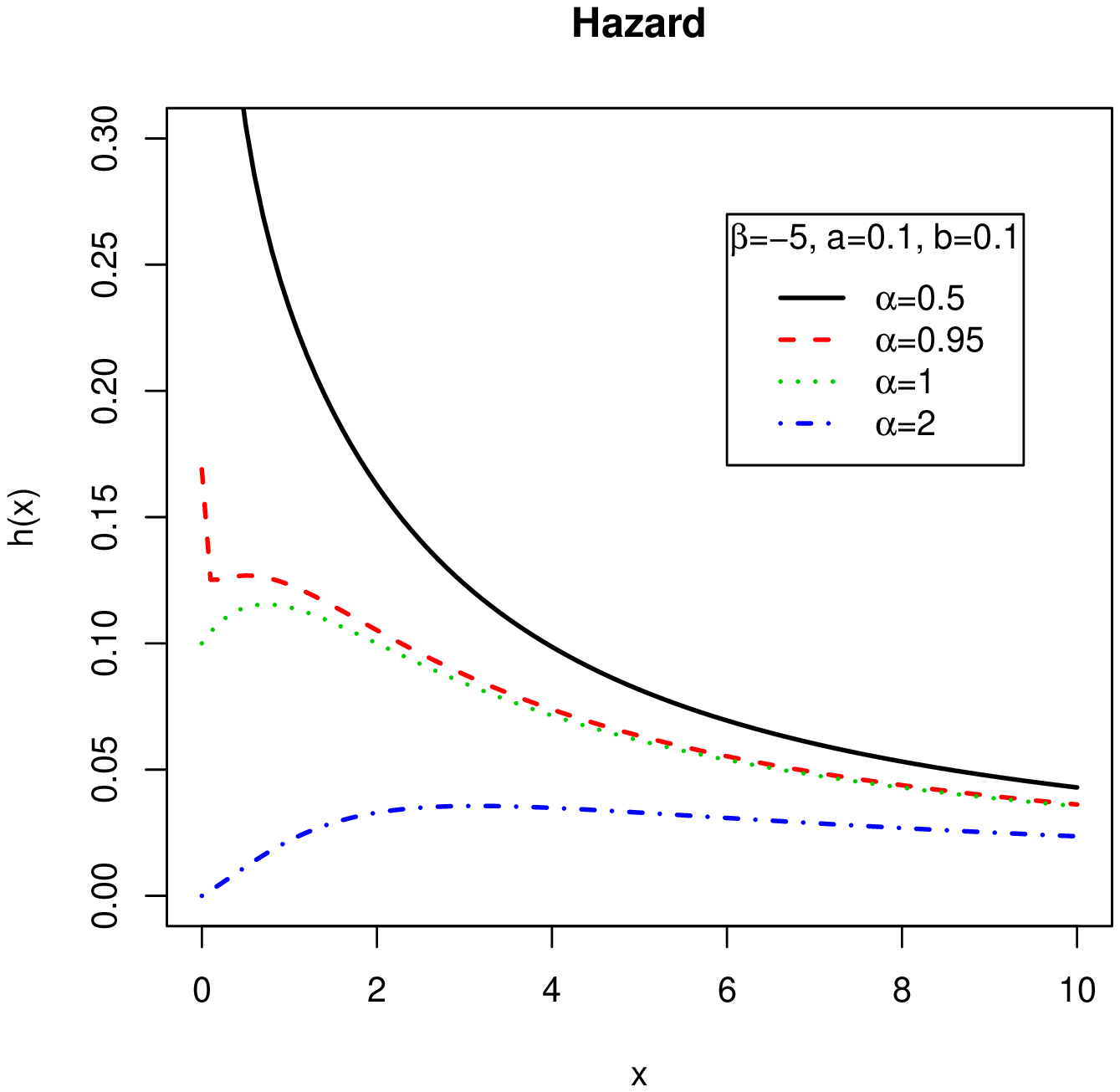}
\vspace{-0.9cm}
\caption{The plot of hrf for some different values of parameters. }\label{fig.haz}
\end{figure}

\begin{theorem} Let $h(x)$ be the hrf of the EGLFR distribution. The limiting behavior of $h(x)$ for different values of its parameters is given bellow:
\[{\mathop{\lim }_{x\to 0^+} h\left(x\right) }=\left\{ \begin{array}{ll}
0 \ \ & {\rm if\ \  }\alpha >1 \\
a \ \ &  {\rm if}\ \  \alpha =1 \\
\infty \ \ & {\rm if}\  \ \alpha <1 \end{array}
\right.{\rm \ \ \ and\ \ \ }{\mathop{\lim }_{x\to c^-} h\left(x\right) }=\left\{ \begin{array}{ll}
\infty \ \ & {\rm if\ \  }\beta >0 \\
0 \ \ & {\rm if}\ \  \beta <0, \end{array}
\right.\]
where $c=\psi $ for $\beta >0$, and $c=\infty $ for $\beta <0$.
\end{theorem}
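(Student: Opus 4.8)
The plan is to work throughout from $h(x)=f(x)/\bar F(x)$, with $\bar F=1-F$, treating the two regimes $x\to 0^+$ and $x\to c^-$ separately and leaning on the pdf limits already recorded in Theorem~\ref{eq.flim}. For $x\to 0^+$ the situation is easy: from \eqref{eq.FEGL} we have $F(x)\to 0$, hence $\bar F(x)\to 1$, so $\lim_{x\to 0^+}h(x)=\lim_{x\to 0^+}f(x)$, and the three cases according to $\alpha$ are exactly the first part of Theorem~\ref{eq.flim}. No further work is needed there.

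For $x\to c^-$ I would factor the EGLFR density and survival function through the ELFR survival function of Proposition~\ref{prop.elfr}. Writing $z=ax+\tfrac b2x^2$ and $w=w(x)=(1-\beta z)^{1/\beta}=\bar F_1(x)$, the EGLFR cdf is $F=1-(1-w)^\alpha$, so $\bar F(x)=1-(1-w)^\alpha$ and, by \eqref{eq.fEGL}, $f(x)=\alpha(1-w)^{\alpha-1}f_1(x)$ where $f_1(x)=(a+bx)(1-\beta z)^{1/\beta-1}$ is the ELFR density. As $x\to c^-$ one checks $w\to 0^+$ (for $\beta>0$ because $1-\beta z\to 0$ as $x\to\psi$, while $1/\beta>0$; for $\beta<0$ because $1-\beta z\to\infty$, while $1/\beta<0$). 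Since then $(1-w)^{\alpha-1}\to 1$ and $1-(1-w)^\alpha\sim\alpha w$, we get
\[
h(x)=\frac{\alpha(1-w)^{\alpha-1}\,w}{1-(1-w)^\alpha}\cdot\frac{f_1(x)}{w}\ \sim\ \frac{f_1(x)}{w}=\frac{a+bx}{1-\beta\bigl(ax+\tfrac b2x^2\bigr)}=:h_1(x),
\]
i.e. the tail of the EGLFR hazard rate is asymptotically that of the ELFR hazard rate $h_1$, which has this simple closed form.

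It then remains to evaluate $\lim_{x\to c^-}h_1(x)$. For $\beta>0$ the denominator of $h_1$ vanishes at $x=\psi$, while the numerator tends to $a+b\psi=\sqrt{a^2+2b/\beta}>0$ (or to $a>0$ when $b=0$, since then $\psi=1/(a\beta)$), so $h_1(x)\to\infty$. For $\beta<0$, $c=\infty$ and the numerator is $O(x)$ whereas $1-\beta z$ grows like $-\tfrac{\beta b}{2}x^2$ (or like $-\beta a\,x$ when $b=0$), both with positive leading coefficient, so $h_1(x)\to 0$. The step I expect to be the genuine obstacle is the sub-case $0<\beta<1$ of the $x\to c^-$ limit: there $f(x)\to 0$ and $\bar F(x)\to 0$ simultaneously, an indeterminate $0/0$ that Theorem~\ref{eq.flim} cannot resolve on its own; the factorization above handles it uniformly in $\alpha$ and $\beta$ and sidesteps a messy L'H\^opital argument, the only delicate point being to note that $a+b\psi>0$ so that the numerator of $h_1$ does not vanish along with the denominator.
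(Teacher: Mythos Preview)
Your argument is correct. The paper's own proof of this theorem is simply the phrase ``The proof is obvious,'' so there is no specific route to compare against; your write-up is a clean way of making that ``obvious'' explicit. The reduction of $\lim_{x\to 0^+}h(x)$ to $\lim_{x\to 0^+}f(x)$ via $\bar F(x)\to 1$ is exactly the natural observation, and your factorization $h(x)\sim h_1(x)=(a+bx)/(1-\beta z)$ as $w\to 0^+$ handles the right endpoint uniformly in $\alpha$ and $\beta$, in particular the $0/0$ regime $0<\beta<1$ that would otherwise require a separate L'H\^opital step. The verification that $a+b\psi=\sqrt{a^2+2b/\beta}>0$ (or $a>0$ when $b=0$) is the right finishing detail.
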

\begin{proof}
The proof is obvious.
\end{proof}

\begin{theorem}
Let $h(x)$ be the hrf of the EGLFR distribution. Then $h(x)$ is increasing function for $\beta \ge 1$ and $\alpha \ge 1$,
\end{theorem}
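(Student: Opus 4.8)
The plan is to prove the stronger statement that, under $\beta\ge1$ and $\alpha\ge1$, the density $f$ is non-decreasing on its support, and then to read off that $h$ is increasing from the elementary fact that a non-negative non-decreasing function divided by a positive strictly decreasing function is increasing. Recall that for $\beta\ge1>0$ the support is the bounded interval $(0,\psi)$, on which $z=ax+\tfrac{b}{2}x^{2}$ ranges over $(0,1/\beta)$; consequently $a+bx>0$, $1-\beta z>0$, $1-(1-\beta z)^{1/\beta}>0$ and $f>0$ throughout $(0,\psi)$, so every logarithmic derivative used below is well defined and finite.

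First I would differentiate $\log f$. From \eqref{eq.fEGL}, for $\beta\neq0$,
\[
\frac{f'(x)}{f(x)}=\frac{b}{a+bx}-(1-\beta)\,\frac{a+bx}{1-\beta z}+(\alpha-1)\,\frac{(a+bx)\,(1-\beta z)^{1/\beta-1}}{1-(1-\beta z)^{1/\beta}}.
\]
Then I would verify that each of the three summands is non-negative on $(0,\psi)$: the first because $b\ge0$ and $a+bx>0$; the second because $1-\beta\le0$ while $a+bx>0$ and $1-\beta z>0$; and the third because $\alpha-1\ge0$ and, as noted, $a+bx$, $(1-\beta z)^{1/\beta-1}$ and $1-(1-\beta z)^{1/\beta}$ are all positive on $(0,\psi)$. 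Hence $f'(x)\ge0$ on $(0,\psi)$, i.e.\ $f$ is non-decreasing there. (The case $\beta=0$ is genuinely excluded, since then the middle term becomes $-(a+bx)<0$.)

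Finally I would conclude. On $(0,\psi)$ the survival function $\bar F=1-F$ is positive and strictly decreasing, because $\bar F'=-f<0$ on the open support, while $f\ge0$ is non-decreasing there. Hence for $0<x_{1}<x_{2}<\psi$ we have $0<f(x_{1})\le f(x_{2})$ and $0<\bar F(x_{2})<\bar F(x_{1})$, so $h(x_{1})=f(x_{1})/\bar F(x_{1})<f(x_{2})/\bar F(x_{2})=h(x_{2})$; equivalently, $\bigl(1/h\bigr)'(x)=\frac{d}{dx}\bigl(\bar F(x)/f(x)\bigr)=-1-\bar F(x)f'(x)/f(x)^{2}\le-1<0$. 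Either way $h$ is increasing on the whole support.

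Everything here is routine; the only delicate point is keeping track of the sign of the middle term $-(1-\beta)(a+bx)/(1-\beta z)$ — which is exactly where the hypothesis $\beta\ge1$ and the boundedness of the support (guaranteeing $1-\beta z>0$) are used — together with checking that none of the factors degenerates or changes sign inside $(0,\psi)$.
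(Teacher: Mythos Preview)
Your proof is correct. The paper takes a slightly different but closely related route: it works with $\log h$ directly (after the change of variable $x\mapsto z$) and shows that
\[
\frac{\partial}{\partial z}\log h(z)=\frac{b}{2bz+a^{2}}+(\beta-1)\frac{1}{1-\beta z}+(\alpha-1)\frac{(1-\beta z)^{1/\beta-1}}{1-(1-\beta z)^{1/\beta}}+\frac{\alpha}{\beta}\,\frac{\bigl(1-(1-\beta z)^{1/\beta}\bigr)^{\alpha-1}(1-\beta z)^{1/\beta-1}}{1-\bigl(1-(1-\beta z)^{1/\beta}\bigr)^{\alpha}}
\]
is term-by-term non-negative when $\alpha\ge1$ and $\beta\ge1$. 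Your argument is really the same computation, reorganized: the first three terms above are (up to the factor $dz/dx=a+bx>0$) exactly your $f'/f$, and the fourth term is $h$ itself, coming from the identity $(\log h)'=(\log f)'+h$. What your decomposition buys is the intermediate statement that $f$ is non-decreasing on $(0,\psi)$, which is of independent interest and dovetails with Theorem~\ref{eq.flim}; it also handles $b=0$ uniformly, whereas the paper defers that case to the literature. What the paper's direct route buys is brevity: one derivative, four signs, done.
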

\begin{proof}
For $b=0$, we have the EGE distribution and proof is given by
\cite{ku-gu-11}.
Here, we consider $b>0$.
Let $z=ax+\frac{b}{2}x^2=\frac{b}{2}{\left(x+\frac{a}{b}\right)}^2-\frac{a^2}{2b}$. Then, $x=\frac{1}{b}\sqrt{2bz+a^2}-\frac{a}{b}$, and for $\beta \ne 0$
\[h\left(z\right)=\frac{\alpha \sqrt{2bz+a^2}{\left(1-\beta  z\right)}^{\frac{1}{\beta }-1}{\left(1-{\left(1-\beta  z\right)}^{{1}/{\beta }}\right)}^{\alpha -1}}{1-{\left(1-{\left(1-\beta  z\right)}^{{1}/{\beta }}\right)}^{\alpha }}.\]
With $u\left(z\right)={\log  \left(h\left(z\right)\right) }$, we have
\begin{eqnarray*}
\frac{\partial }{\partial z}u\left(z\right)&=&\frac{b}{2bz+a^2}+\left(\beta -1\right)\frac{1}{1-\beta z}+\left(\alpha -1\right)\frac{{\left(1-\beta z\right)}^{\frac{1}{\beta }- 1}}{1-{\left(1-\beta z\right)}^{\frac{1}{\beta }}}\\
&&+\frac{\alpha }{\beta }\frac{{\left(1-{\left(1-\beta  z\right)}^{{1}/{\beta }}\right)}^{\alpha -1}{\left(1-\beta z\right)}^{{1}/{\beta }-1}}{1-{\left(1-{\left(1-\beta z\right)}^{{1}/{\beta }}\right)}^{\alpha }}.
\end{eqnarray*}
If $\beta \ge 1$ and $\alpha \ge 1$, then the hrf is an increasing function.
\end{proof}

The reversible hazard function of EGLFR distribution is
\[r\left(x;\alpha ,\beta ,a,b\right)=\frac{f\left(x;\alpha ,\beta ,a,b\right)}{F\left(x;\alpha ,\beta ,a,b\right)}=\frac{\alpha \left(a+bx\right){\left(1-\beta (ax+\frac{b}{2}x^2)\right)}^{\frac{1}{\beta }-1}}{\left(1-{\left(1-\beta  (ax+\frac{b}{2}x^2)\right)}^{{1}/{\beta }}\right)}.\]
It is clear that $r\left(x;\alpha ,\beta ,a,b\right)=\alpha  r\left(x;1,\beta ,a,b\right).$

\section{ Properties of EGLFR distribution}
\label{sec.pro}
We provide some of statistical properties of introduced distribution such as moments and quantiles. For more properties, reader can see
\cite{sa-ku-09}
and
\cite{ku-gu-11}
for $\beta =0$ and $b=0$, respectively.

\subsection{ Quantiles of EGLFR distribution}

The quantile function of EGLFR distribution is
\[Q\left(u\right)=\left\{ \begin{array}{ll}
\frac{1}{b}\sqrt{a^2+\frac{2b}{\beta }\left[1-{\left(1-\sqrt[{\alpha }]{u}\right)}^{\beta }\right]}-\frac{a}{b} \ \ & {\rm if\ \ \ } \beta \ne 0, b>0 \\
\frac{1}{b}\sqrt{a^2-2b{\log  \left(1-\sqrt[{\alpha }]{u}\right) }}-\frac{a}{b} \ \ & {\rm if}\ \ \  \beta =0, b>0 \\
\frac{1}{\beta a}\left(1-{\left(1-\sqrt[{\alpha }]{u}\right)}^{\beta }\right)\ \ & {\rm if\ \ \ } \beta \ne 0, b=0 \\
-\frac{1}{a}{\log  \left(1-\sqrt[{\alpha }]{u}\right)} \ \ & {\rm if\ \ \ } \beta =0, b=0. \end{array}
\right.\]

The median of the EGLFR distribution can be obtain by letting $u=0.5$. We can use this function for generating data form EGLFR distribution by generating data from a uniform distribution. For checking, we have generated a random sample from ${\rm EGLFR}(0.8,2,0.5,0.1)$. The histogram of this random sample and pdf of EGLFR are plotted in Figure \ref{fig.sim} (left). In addition, the empirical cdf of this sample and cdf of EGLFR are plotted in Figure \ref{fig.sim} (right).

\begin{figure}[ht]
\centering
\includegraphics[width=7.7cm,height=7.7cm]{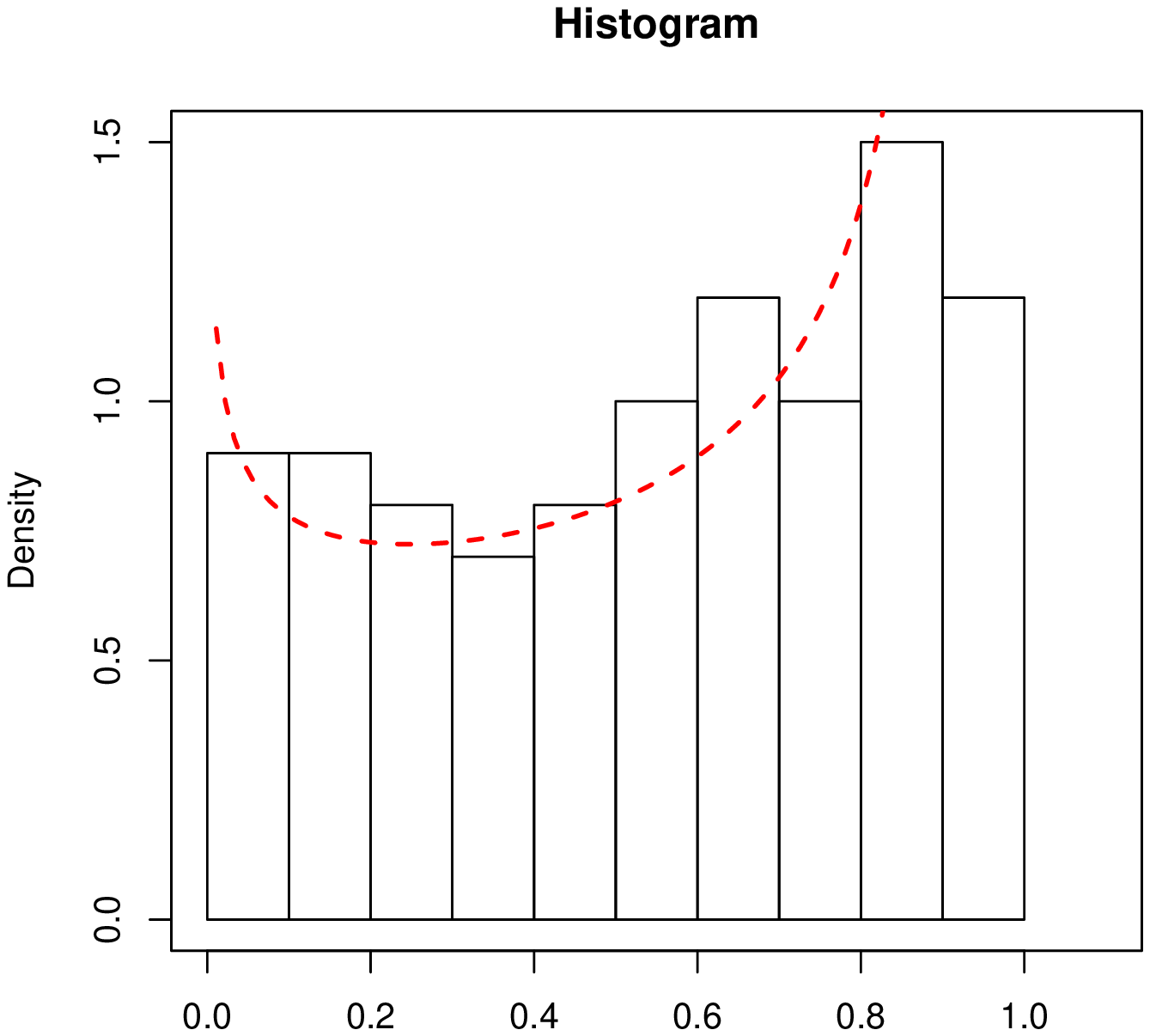}
\includegraphics[width=7.7cm,height=7.7cm]{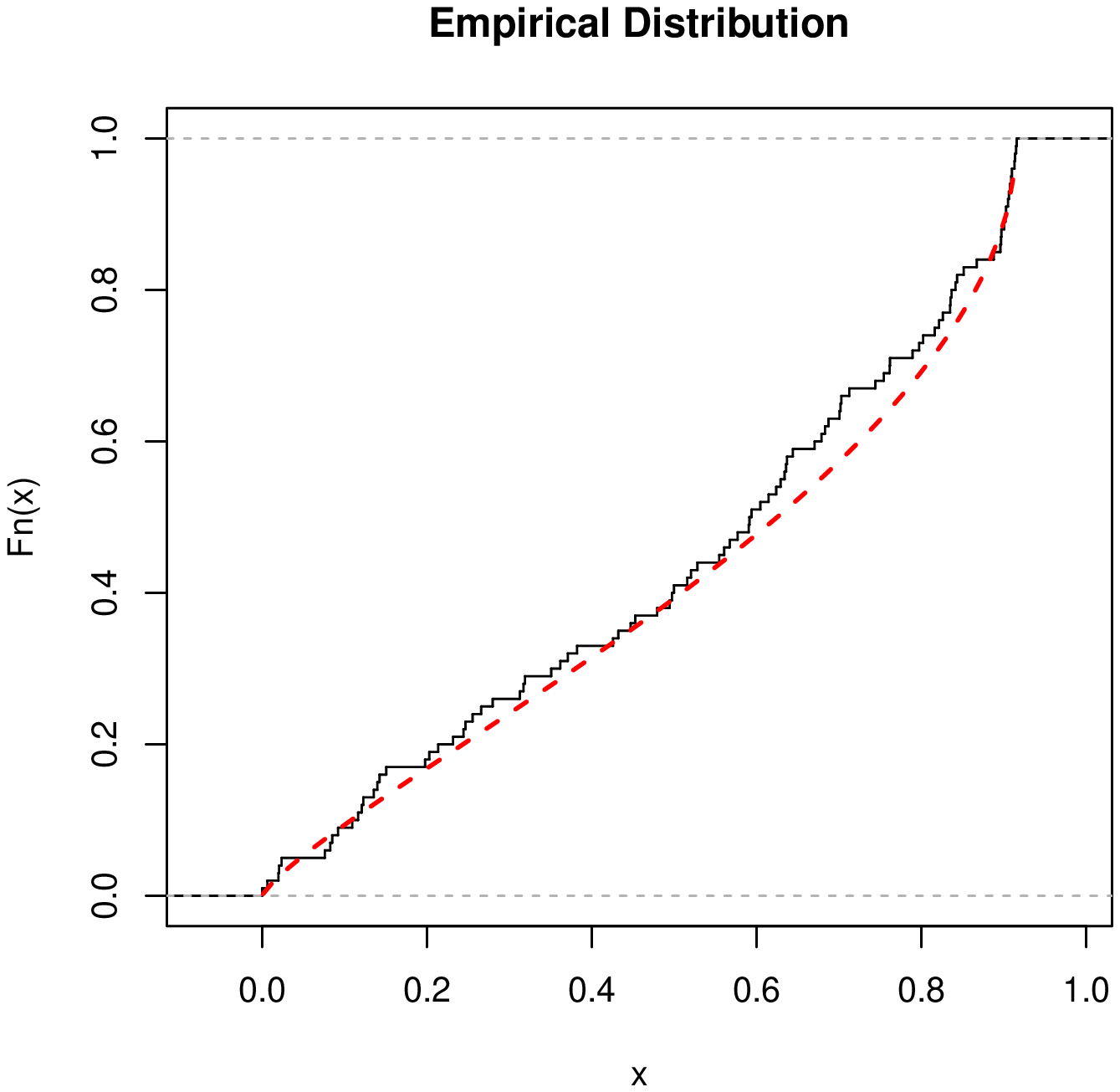}
\vspace{-0.9cm}
\caption{The histogram of the simulated data set and   the  pdf of EGLFR distribution (left); the empirical cdf of the simulated data set and the cdf of EGLFR distribution (right). }\label{fig.sim}
\end{figure}

\subsection{ Skewness and kurtosis of EGLFR distribution}

The Bowley skewness
\citep[see][]{ke-ke-62}
(
based on quantiles can be calculated by
\[B=\frac{Q\left(\frac{3}{4}\right)-2Q\left(\frac{1}{2}\right)+Q\left(\frac{1}{4}\right)}{Q\left(\frac{3}{4}\right)-Q\left(\frac{1}{4}\right)},\]
and the Moors kurtosis
\cite[see][]{moors-88}
is defined as
\[M=\frac{Q\left(\frac{7}{8}\right)-Q\left(\frac{5}{8}\right)+Q\left(\frac{3}{8}\right)-Q\left(\frac{1}{8}\right)}{Q\left(\frac{6}{8}\right)-Q\left(\frac{2}{8}\right)}.\]

These measures are less sensitive to outliers and they exist even for distributions without moments. For the standard normal and the classical standard $t$ distributions with 10 degrees of freedom, the Bowley measure is 0. The Moors measure for these distributions is 1.2331 and 1.27705, respectively. In Figure \ref{fig.bow}, we plot Bowley and Moors measures (with $a=b=0.1$) as a function of $\beta $ for fixed values of $\alpha$.

\begin{figure}[ht]
\centering
\includegraphics[width=7.7cm,height=7.7cm]{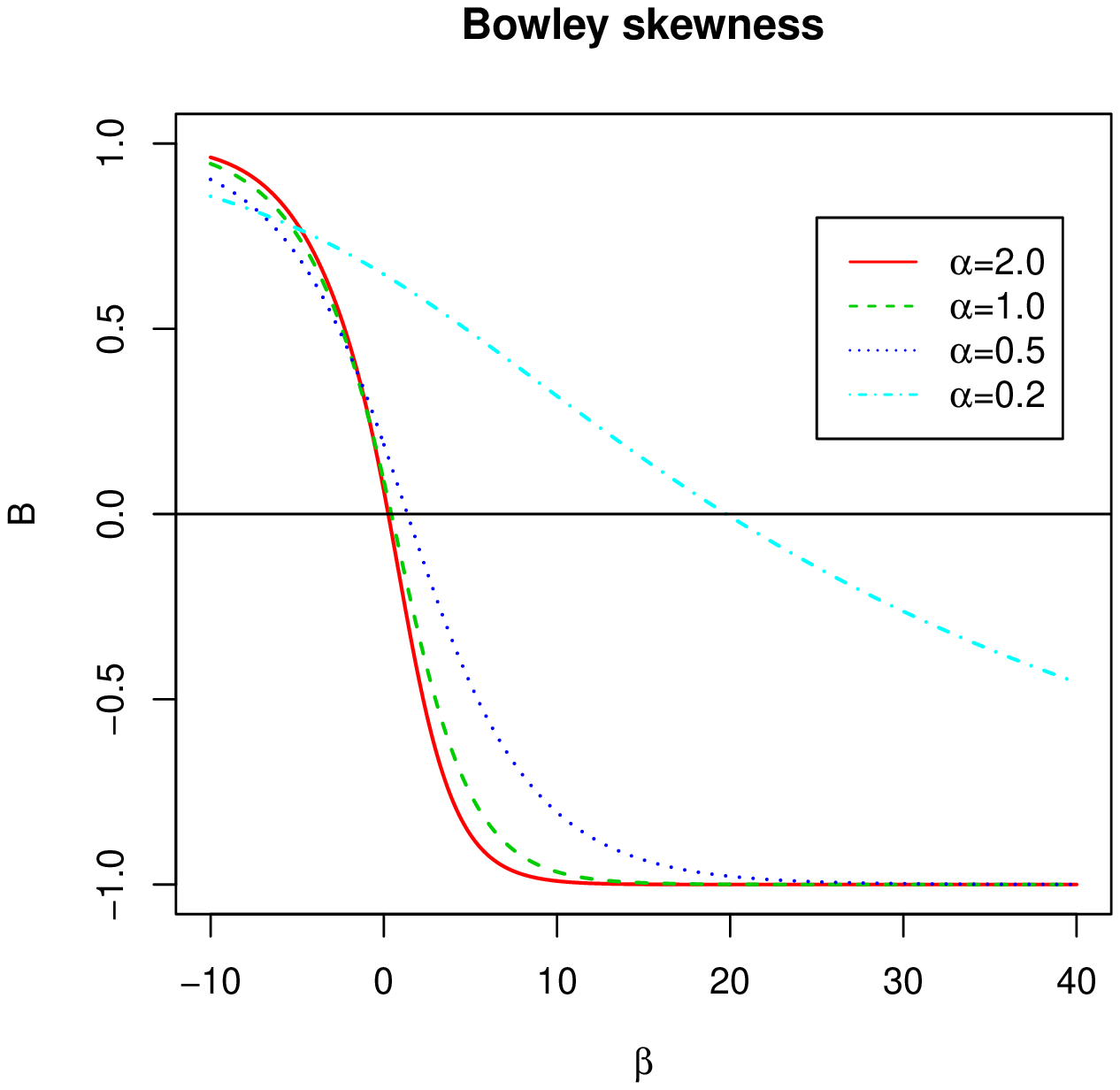}
\includegraphics[width=7.7cm,height=7.7cm]{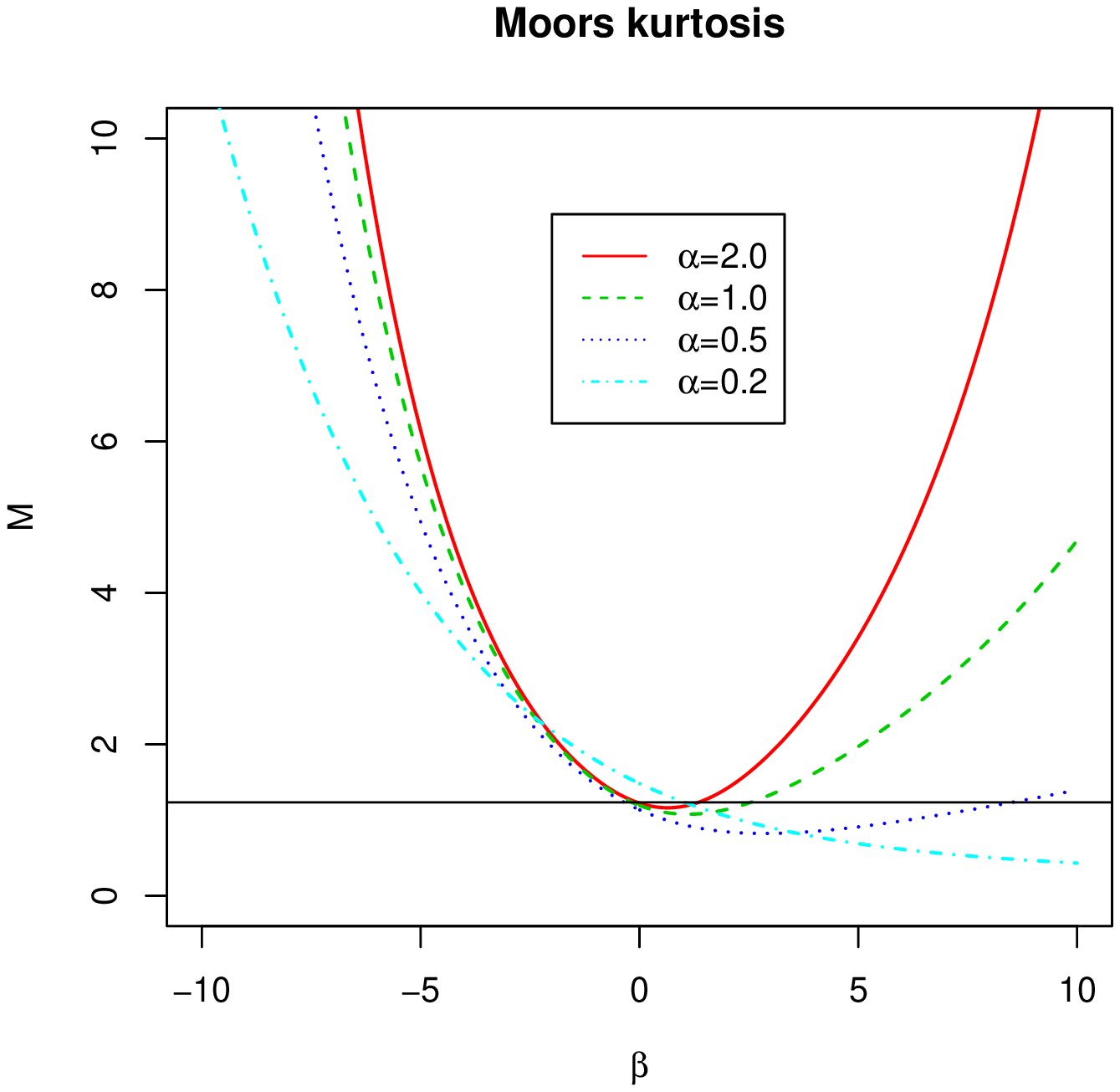}
\vspace{-0.9cm}
\caption{Plot of Bowley measure (left) and Moors measure for some parameters. (right). }\label{fig.bow}
\end{figure}

\subsection{ Moments of EGLFR distribution}

Using the following theorem, we can calculate the $r$-th non-central moment, ${\mu }^{\left(r\right)}$, of EGLFR distribution in a series form.

\begin{theorem} \label{thm.mu} Let $X$ has a ${\rm EGLFR}(\alpha,\beta,a,b)$. Then,  for $\beta >0$ and $b>0$,
\[{\mu }^{\left(r\right)}=E(X^r)=\sum^{\infty }_{n=0}{\sum^{\infty }_{m=0}{\sum^m_{k=0}{\frac{\alpha }{2^k}{\left(-1\right)}^{m+n}}
\Lambda_{[n,m,k]}
\beta^m a^{m-k}b^k{\psi }^j(\frac{a}{j}+\frac{b\psi }{j+1})}},\]
where $j=m+k+r+1$ and  $\Lambda_{[n,m,k]}=\binom{\alpha -1}{n}
\binom{\frac{n+1}{\beta }-1}{m}
\binom{m}{k}$.
\end{theorem}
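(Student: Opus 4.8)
The plan is to start from the defining integral $\mu^{(r)}=\int_0^{\psi} x^r f(x)\,dx$, with $f$ the pdf in \eqref{eq.fEGL} for the branch $\beta\ne 0$ (restricted to $\beta>0$, $b>0$), and reduce the integrand to a triple power series in $x$ that can be integrated monomial by monomial over the finite interval $(0,\psi)$. The two structural facts I would exploit are: (a) on the open support, $0<(1-\beta z)^{1/\beta}<1$ and $0<\beta z<1$, where $z=ax+\tfrac{b}{2}x^2$, so the generalized binomial theorem $(1-w)^c=\sum_{j\ge 0}\binom{c}{j}(-w)^j$ applies (and even extends to the endpoints, since the relevant exponents $\alpha-1$ and $\tfrac{n+1}{\beta}-1$ are both $>-1$); and (b) $\int_0^{\psi} x^{j-1}\,dx=\psi^{j}/j$.

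First I would expand $\bigl(1-(1-\beta z)^{1/\beta}\bigr)^{\alpha-1}=\sum_{n\ge 0}\binom{\alpha-1}{n}(-1)^n(1-\beta z)^{n/\beta}$, which combines with the factor $(1-\beta z)^{1/\beta-1}$ already present in $f$ to give $(1-\beta z)^{(n+1)/\beta-1}$. Expanding this a second time in the variable $\beta z$ yields $\sum_{m\ge 0}\binom{(n+1)/\beta-1}{m}(-1)^m\beta^m z^m$, and then the ordinary binomial theorem gives $z^m=\sum_{k=0}^{m}\binom{m}{k}\frac{a^{m-k}b^k}{2^k}x^{m+k}$. Multiplying by the remaining factor $\alpha(a+bx)x^r$ and collecting powers of $x$ turns the integrand into $\alpha\sum_{n,m,k}(-1)^{m+n}\frac{1}{2^k}\Lambda_{[n,m,k]}\beta^m a^{m-k}b^k\,(a+bx)\,x^{m+k+r}$. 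Granting the interchange of the summations with the integral, termwise integration via $\int_0^{\psi}(a+bx)x^{m+k+r}\,dx=\psi^{j}\bigl(\tfrac{a}{j}+\tfrac{b\psi}{j+1}\bigr)$ with $j=m+k+r+1$ produces exactly the stated series, since $a^{m-k}b^k$, $\beta^m$, $\tfrac{\alpha}{2^k}$ and $\Lambda_{[n,m,k]}=\binom{\alpha-1}{n}\binom{\frac{n+1}{\beta}-1}{m}\binom{m}{k}$ are precisely the coefficients that emerge.

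The main obstacle I expect is justifying the interchange of the infinite sums with the integral, since the summands are not of one sign (because of the $(-1)^{m+n}$ and the signs inside the generalized binomial coefficients), so a direct appeal to Tonelli's theorem is unavailable. I would instead argue by dominated convergence: on the compact support $[0,\psi]$ each factor $(1-\beta z)^{(n+1)/\beta-1}$ is bounded (using $(n+1)/\beta-1>-1$ so the corresponding binomial series converges up to the endpoint $x=\psi$, where $\beta z=1$), and $\bigl|1-(1-\beta z)^{1/\beta}\bigr|^{\alpha-1}$ is likewise bounded, so the series for the integrand converges absolutely and uniformly on $[0,\psi-\varepsilon]$ for every $\varepsilon>0$ and is dominated by a fixed integrable function on $[0,\psi]$. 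This legitimizes the termwise integration; the three binomial expansions and the elementary integral $\int_0^{\psi}x^{j-1}\,dx=\psi^{j}/j$ are then routine, and a final bookkeeping of the coefficients gives the formula.
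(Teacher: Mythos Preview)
Your proposal is correct and follows essentially the same route as the paper: two successive generalized binomial expansions (first of $\bigl(1-(1-\beta z)^{1/\beta}\bigr)^{\alpha-1}$, then of $(1-\beta z)^{(n+1)/\beta-1}$), an ordinary binomial expansion of $z^m=x^m(a+\tfrac{b}{2}x)^m$, and termwise integration of $(a+bx)x^{m+k+r}$ over $(0,\psi)$. The only difference is that you attempt to justify the interchange of summation and integration, whereas the paper performs it without comment.
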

\begin{proof} Consider $z =ax+\frac{b}{2}x^2$. If $\beta >0$, then support of $X$ is $(0, \psi)$. So,
 $\beta z<1$ and ${(1-\beta z)}^{{1}/{\beta }}<1$. By using binomial series expansion, we have
\begin{eqnarray*}
&&{\left(1-{(1-\beta z)}^{{1}/{\beta }}\right)}^{\alpha -1}=\sum^{\infty }_{n=0}(-1)^n
\binom{\alpha -1}{n}
{(1-\beta z)}^{\frac{n}{\beta }},\\
&&(1-\beta z)^{\frac{n+1}{\beta }-1}=\sum^{\infty }_{m=0}{{\left(-1\right)}^{m}
\binom{\frac{n+1}{\beta }-1}{m}{(\beta z)}^m},\\
&&(a+\frac{b}{2}x)^m=\sum_{k=0}^m \binom{m}{k} a^{m-k} \frac{b^k}{2^k} x^k.
\end{eqnarray*}
Therefore,
\begin{eqnarray*}
{\mu }^{\left(r\right)}&=& \int^{\psi }_0 x^r \alpha \left(a+bx\right)
{\left(1-\beta  z\right)}^{\frac{1}{\beta }-1}{\left(1-{\left(1-\beta  z\right)}^{{1}/{\beta }}\right)}^{\alpha -1}dx\\
&=&\int^{\psi}_0{\alpha x^r\left(a+bx\right)\sum^{\infty }_{n=0}{{\left(-1\right)}^n
\binom{\alpha -1}{n}}{(1-\beta z)}^{\frac{n+1}{\beta }-1}dx}\\
&=& \int^{\psi }_0{\alpha x^r\left(a+bx\right)\sum^{\infty }_{n=0}{\sum^{\infty }_{m=0}{{\left(-1\right)}^{n+m}
\binom{\alpha -1}{n}\binom{\frac{n+1}{\beta }-1}{m}\beta^m(a+\frac{b}{2}x)^m}}dx}\\
&=&\int^{\psi }_0{\alpha x^r\left(a+bx\right)\sum^{\infty }_{n=0}{\sum^{\infty }_{m=0}{\sum^m_{k=0}{{\left(-1\right)}^{n+m}
\binom{\alpha -1}{n}
\binom{\frac{n+1}{\beta }-1}{m}
\binom{m}{k}
{\beta }^ma^{m-k}\frac{b^k}{2^k}x^{m+k}}}}dx}\\
&=&\int^{\psi }_0{\alpha x^r\left(a+bx\right)\sum^{\infty }_{n=0}{\sum^{\infty }_{m=0}{\sum^m_{k=0}{{\left(-1\right)}^{n+m}
\Lambda_{[n,m,k]}
{\beta }^ma^{m-k}\frac{b^k}{2^k}x^{m+k}}}}dx}\\
&=&\sum^{\infty }_{n=0}{\sum^{\infty }_{m=0}{\sum^m_{k=0}{\frac{\alpha }{2^k}{\left(-1\right)}^{m+n}}
\Lambda_{[n,m,k]}
\beta ^ma^{m-k}b^k}}\int^{\psi }_0{x^{m+k+r}\left(a+bx\right)dx}\\
&=&\sum^{\infty }_{n=0}{\sum^{\infty }_{m=0}{\sum^m_{k=0}{\frac{\alpha }{2^k}{\left(-1\right)}^{m+n}}
\Lambda_{[n,m,k]}
\beta^m a^{m-k}b^k{\psi }^j(\frac{a}{j}+\frac{b\psi }{j+1})}},
\end{eqnarray*}
and proof is completed.
\end{proof}

\begin{theorem} \label{thm.mxt} Let $X$ has a ${\rm EGLFR}(\alpha,\beta,a,b)$. Then,  for $\beta >0$ and $b>0$,
\begin{eqnarray*}
M_X(t)=E({\rm e}^{tx})
=\sum^{\infty }_{n =0}\sum^{\infty }_{m  =0}\sum^m_{k=0}\sum^{m+k}_{i=0}{\frac{{\left(-1\right)}^{n+m+i}{\rm e}^{t\psi }\left(m{  +}k\right)!}{t^{i+1}\left(m{  +}k-i\right)!}\frac{\alpha }{{{  2}}^k}{\Lambda }_{\left[n,m,k\right]}{\beta }^ma^{m-k}b^k}{\psi }^{m+k-i}\\
\times
\left[a+\frac{b\left(m{  +}k+1\right)\psi }{\left(m{  +}k+1-i\right)}\right]-\sum^{\infty }_{n{  =0}}{}\sum^{\infty }_{m{  =0}}{}\sum^m_{k{  =0}}
\frac{{\left(-1\right)}^{n+2m+k}\left(a-\frac{b}{t}\right)}{t^{m +k+1}}
\frac{\alpha }{{{  2}}^k}{\Lambda }_{\left[n,m,k\right]}{\beta }^ma^{m -k}b^k.
\end{eqnarray*}
where
 $\Lambda_{[n,m,k]}=\binom{\alpha -1}{n}\binom{\frac{n+1}{\beta }-1}{m}
\binom{m}{k}$.
\end{theorem}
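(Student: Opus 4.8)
The plan is to compute the moment generating function directly from the density, reusing the same series expansion that was already established in the proof of Theorem \ref{thm.mu}. Starting from $M_X(t)=\int_0^{\psi} {\rm e}^{tx} \alpha(a+bx)(1-\beta z)^{1/\beta-1}(1-(1-\beta z)^{1/\beta})^{\alpha-1}\,dx$ with $z=ax+\frac{b}{2}x^2$, I would apply the three binomial expansions exactly as in Theorem \ref{thm.mu} — first expanding $(1-(1-\beta z)^{1/\beta})^{\alpha-1}$, then $(1-\beta z)^{(n+1)/\beta-1}$ in powers of $\beta z$, then $(a+\frac{b}{2}x)^m$ in powers of $x$ — to reduce the integrand to $\sum_n\sum_m\sum_k \frac{\alpha}{2^k}(-1)^{n+m}\Lambda_{[n,m,k]}\beta^m a^{m-k}b^k \, {\rm e}^{tx} x^{m+k}(a+bx)$. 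The only genuinely new analytic ingredient compared to Theorem \ref{thm.mu} is that we now have to evaluate $\int_0^{\psi} {\rm e}^{tx} x^{p}(a+bx)\,dx$ rather than $\int_0^{\psi} x^p(a+bx)\,dx$.

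For that integral I would use the standard antiderivative of ${\rm e}^{tx}x^{p}$ obtained by repeated integration by parts: for a nonnegative integer $p$, $\int {\rm e}^{tx}x^p\,dx = {\rm e}^{tx}\sum_{i=0}^{p} \frac{(-1)^i p!}{t^{i+1}(p-i)!}x^{p-i}$. Applying this with $p=m+k$ and $p=m+k+1$ (for the $a$-term and the $b$-term respectively), and then evaluating between $0$ and $\psi$, produces two contributions: the value at the upper limit $\psi$ gives the ${\rm e}^{t\psi}$ terms, and the value at the lower limit $0$ — where only the $i=p$ summand survives, equal to $(-1)^p p!/(t^{p+1}\cdot 0!)$ — gives the terms without the exponential. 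Collecting the $a$- and $b$-pieces at $x=\psi$ yields the bracket $[a+\frac{b(m+k+1)\psi}{(m+k+1-i)}]$ after reindexing the $b$-sum's dummy variable so both run over the same index $i$ from $0$ to $m+k$; collecting the pieces at $x=0$ gives the combined factor $(a-\frac{b}{t})$ with the overall $t^{-(m+k+1)}$ and the sign $(-1)^{m+k}$ absorbed into $(-1)^{n+2m+k}$ (noting $(-1)^{2m}=1$ matches the $(-1)^{n+m}$ from the expansion times the $(-1)^{m+k}$ from the lower-limit term).

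The main obstacle — really the only place where care is needed — is the bookkeeping in the upper-limit term: the $b(a+bx)$ factor raises the power of $x$ by one, so before merging it with the $a$-term one must shift the summation index, and one must check that the factorial ratios $\frac{(m+k)!}{(m+k-i)!}$ and, for the shifted $b$-term, $\frac{(m+k+1)!}{(m+k+1-i)!}$ combine correctly into the stated closed form with $(m+k)!$ pulled out in front and $(m+k+1)$ appearing inside the bracket. There is no convergence subtlety beyond what Theorem \ref{thm.mu} already needs: on the compact support $(0,\psi)$ we have $\beta z<1$ and $(1-\beta z)^{1/\beta}<1$, so all three binomial series converge and term-by-term integration is justified exactly as before. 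Hence the proof reduces to substituting the expansion, invoking the integration-by-parts antiderivative, separating upper and lower endpoint contributions, and reindexing; I would present it in that order and leave the elementary algebra of the factorial ratios to the reader.
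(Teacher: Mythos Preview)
Your proposal is correct and follows essentially the same route as the paper: the paper also invokes the expansion from Theorem~\ref{thm.mu} verbatim to reduce $M_X(t)$ to $\sum_{n,m,k}\frac{\alpha}{2^k}(-1)^{n+m}\Lambda_{[n,m,k]}\beta^m a^{m-k}b^k\int_0^{\psi}x^{m+k}(a+bx)e^{tx}\,dx$, and then evaluates the remaining integral via the same repeated--integration--by--parts identity $\int x^n e^{tx}\,dx = e^{tx}n!\sum_{i=0}^n\frac{(-1)^i}{t^{i+1}(n-i)!}x^{n-i}$ (which it cites from Gradshteyn--Ryzhik), splitting the result into the upper-endpoint and lower-endpoint contributions exactly as you describe. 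Your write-up is in fact more explicit than the paper's about the reindexing and the sign bookkeeping, but the underlying argument is identical.
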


\begin{proof}

Since \citep[see][Section 2.321]{gr-ry-07}
\[\int{x^n{\rm e}^{tx}dx}={\rm e}^{tx}n!\sum^n_{i=0}{\frac{{\left(-1\right)}^i}{t^{i+1}\left(n-i\right)!}}x^{n-i},\]
we have
\begin{eqnarray*}
\int^{\psi }_0x^{m+k}\left(a+bx\right){\rm e}^{tx}dx&=&a\int^{\psi }_0x^{m +k}{\rm e}^{tx}dx+b\int^{\psi }_0{}x^{m{  +}k+1}{\rm e}^{tx}dx\\
&=&
{\rm e}^{t\psi }\left(m{  +}k\right)!\sum^{m{  +}k}_{i=0}{\frac{{\left(-1\right)}^i}{t^{i+1}\left(m+k-i\right)!}}\psi^{m +k-i}\left[a+\frac{b\left(m{  +}k+1\right)\psi }{\left(m+k+1-i\right)}\right]\\
&&
-\frac{{\left(-1\right)}^{m{  +}k}(a-\frac{b}{t})}{t^{m{  +}k+1}}.
\end{eqnarray*}
Similar to Theorem \ref{thm.mu}, we have
$$
M_{X}(t)=\sum\limits_{n=0}^{\infty}\sum\limits_{m=0}^{\infty}\sum\limits_{k=0}^{m}\frac{\alpha}{2^{k}}(-1)^{n+m}\Lambda_{[n,m,k]}\beta^{m}a^{m-k}b^{k}\int_{0}^{\psi} x^{^{m+k}}(a+bx){\rm e}^{tx}dx.
$$
and proof is completed.
\end{proof}

\begin{corollary} For $\beta >0$, $b>0$ and $a=0$, from  Theorem \ref{thm.mu} we have
\[{\mu }^{\left(r\right)}=\sum^{\infty }_{n=0}{\sum^{\infty }_{m=0}{\frac{\alpha b}{\left(2m+r+1\right)}{\left(-1\right)}^{n+m}
\binom{\alpha -1}{n}
\binom{\frac{n+1}{\beta }-1}{m}
{(\frac{2}{b\beta })}^{\frac{r+1}{2}} .}}\]
\end{corollary}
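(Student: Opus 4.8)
The plan is to specialize Theorem~\ref{thm.mu} to the case $a=0$ and simplify. When $a=0$ the quantity $\psi$ in Theorem~\ref{thm.mu} part ii degenerates: since $\psi=\frac1b\sqrt{a^2+\frac{2b}{\beta}}-\frac ab$, setting $a=0$ gives $\psi=\sqrt{\frac{2}{b\beta}}=\left(\frac{2}{b\beta}\right)^{1/2}$. So first I would substitute $a=0$ into the triple-sum formula of Theorem~\ref{thm.mu}.

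Next I would track which terms survive. In the general formula the summand contains the factor $a^{m-k}$, which vanishes unless $m=k$ (since $m-k\ge 0$ always and $a^0=1$, $a^{m-k}=0$ for $m>k$ when $a=0$). Hence the inner sum over $k$ from $0$ to $m$ collapses to the single term $k=m$, killing one summation and reducing the triple sum to a double sum over $n$ and $m$. With $k=m$ we have: the factor $\frac{\alpha}{2^k}=\frac{\alpha}{2^m}$; the sign $(-1)^{m+n}$; the binomial product $\Lambda_{[n,m,m]}=\binom{\alpha-1}{n}\binom{\frac{n+1}{\beta}-1}{m}\binom{m}{m}=\binom{\alpha-1}{n}\binom{\frac{n+1}{\beta}-1}{m}$; the power $\beta^m a^{m-k}b^k=\beta^m b^m$; the index $j=m+k+r+1=2m+r+1$; and the bracket $\frac{a}{j}+\frac{b\psi}{j+1}$ which, with $a=0$, becomes $\frac{b\psi}{2m+r+2}$.

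Then I would assemble the surviving factors and simplify the $\psi$-powers. The contribution is $\frac{\alpha}{2^m}(-1)^{m+n}\binom{\alpha-1}{n}\binom{\frac{n+1}{\beta}-1}{m}\beta^m b^m\,\psi^{2m+r+1}\cdot\frac{b\psi}{2m+r+2}=\frac{\alpha b}{2m+r+2}(-1)^{m+n}\binom{\alpha-1}{n}\binom{\frac{n+1}{\beta}-1}{m}\left(\frac{\beta b}{2}\right)^m\psi^{2m+r+2}$. Now $\psi^2=\frac{2}{b\beta}$, so $\psi^{2m+r+2}=\left(\frac{2}{b\beta}\right)^m\psi^{r+2}$ and $\left(\frac{\beta b}{2}\right)^m\left(\frac{2}{b\beta}\right)^m=1$, so the $m$-dependent power factors cancel, leaving $\psi^{r+2}$. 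Hmm — the target states exponent $\frac{r+1}{2}$ on $\frac{2}{b\beta}$, i.e.\ $\psi^{r+1}$ in the numerator; I would double-check whether the bracket's second term denominator is $2m+r+1$ rather than $2m+r+2$ (consistent with the stated $(2m+r+1)$ in the corollary), which would pair with $\psi^{2m+r+1}$ and give $\psi^{r+1}=\left(\frac{2}{b\beta}\right)^{(r+1)/2}$ after the cancellation $\left(\frac{\beta b}{2}\right)^m\left(\frac{2}{b\beta}\right)^m=1$ on the remaining $\psi^{2m}$ part. Either way the mechanism is the same: the only real obstacle is bookkeeping the index shifts so the $m$-powers telescope to a constant and the denominator matches $2m+r+1$. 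After that the double sum reads exactly the claimed expression, completing the proof.
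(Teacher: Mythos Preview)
Your approach is exactly the intended one: set $a=0$ in Theorem~\ref{thm.mu}, note that $a^{m-k}$ forces $k=m$, and simplify using $\psi^2=\frac{2}{b\beta}$. The paper gives no separate proof for the corollary, so this specialization is all that is meant.

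The discrepancy you flagged is real, and your instinct not to fudge the bookkeeping is correct. With $a=0$ and $k=m$ one has $j=2m+r+1$, and the surviving bracket term is $\frac{b\psi}{j+1}=\frac{b\psi}{2m+r+2}$, so the combined $\psi$-power is $\psi^{j+1}=\psi^{2m+r+2}$. After the cancellation $\bigl(\tfrac{\beta b}{2}\bigr)^m\bigl(\tfrac{2}{b\beta}\bigr)^m=1$ this leaves $\psi^{r+2}=\bigl(\tfrac{2}{b\beta}\bigr)^{(r+2)/2}$ and denominator $2m+r+2$, not $2m+r+1$ and exponent $(r+1)/2$ as printed. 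A quick sanity check with $\alpha=1$, $r=2$ confirms your version: it reproduces $E(X^2)=\frac{2}{b(\beta+1)}$ from Remark~\ref{rem.moment}, whereas the printed exponents do not. So the corollary as stated carries a typo; your derivation is the correct one.
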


\begin{remark} \label{rem.moment} Using the equation (6) of
\cite{ku-gu-11}
and proposition \ref{prop.1} for $Y=aX+\frac{b}{2}X^2$, we have
\[E\left({\left(1-\beta Y\right)}^k\right)=\frac{\Gamma (\alpha +1)\Gamma (k\beta +1)}{\Gamma (\alpha +k\beta +1)}:={{\rm g}}_k,\ \ \ k\beta +1>0,\ \ \ \ \beta \ne 0. \]
When $a=0$,
\[E(X^{2k})=(-1)^k(\frac{2}{\beta b})^k\left({{\rm g}}_k-1-\sum^{k-1}_{r=1}{{(-1)}^r}
\binom{k}{r}
(\frac{\beta b}{2})^r E(X^{2r})\right),\ \ \ k=2,3,\dots \ .\]
Therefore,
\[E\left(X^2\right)=\frac{2}{b\beta }\left(1-{{\rm g}}_1\right),\ \ \ \ \ \ \ \ E\left(X^4\right)=\frac{4}{b^2{\beta }^2}\left({{\rm g}}_2-2{{\rm g}}_1+1\right).\]
\end{remark}

\begin{theorem} \label{thm.moment}
Let $X$ has a ${\rm EGLFR}(\alpha,\beta,a,b)$. Then,  for $\beta <0$ and $\alpha\geq1$, the k-th moment of $X$ does not exist when $k\ge -\frac{2}{\beta }$ .
\end{theorem}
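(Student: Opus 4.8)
The plan is to show that the tail of $\bar F(x)$ decays only polynomially when $\beta<0$, so that the defining integral $\int_0^\infty x^k f(x)\,dx$ diverges precisely when $k$ is large enough. First I would reduce to $\alpha=1$: since $\bar F(x)=1-\bigl(1-(1-\beta z)^{1/\beta}\bigr)^{\alpha}$ with $z=ax+\tfrac b2x^2$, and the factor $\bigl(1-(1-\beta z)^{1/\beta}\bigr)^{\alpha-1}$ is bounded between $0$ and $1$ as $x\to\infty$ when $\alpha\ge1$, the moment $E(X^k)$ is finite if and only if it is finite in the $\alpha=1$ case; equivalently one may just track the order of $f(x)$ directly. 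With $s=-\tfrac1\beta>0$ I would write, exactly as in the heavy-tail theorem above, $\bar F(x)=\bigl(1+\tfrac zs\bigr)^{-s}$ for $\alpha=1$, so that $\bar F(x)\sim (s/z)^s$ as $x\to\infty$, and since $z=ax+\tfrac b2x^2$ behaves like $\tfrac b2 x^2$ (or like $ax$ if $b=0$) this gives $\bar F(x)\asymp x^{-2s}=x^{2/\beta}$ when $b>0$.

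Next I would invoke the standard fact that for a nonnegative random variable $E(X^k)=\int_0^\infty k x^{k-1}\bar F(x)\,dx$, so finiteness of the $k$-th moment is governed by the convergence of $\int^\infty x^{k-1}\bar F(x)\,dx$. Plugging in $\bar F(x)\asymp x^{-2s}$ gives an integrand of order $x^{k-1-2s}$, whose integral over a neighborhood of infinity converges iff $k-1-2s<-1$, i.e.\ $k<2s=-\tfrac2\beta$, and diverges iff $k\ge -\tfrac2\beta$. For general $\alpha\ge1$ the same comparison applies because $\bar F(x)$ is then, up to a bounded factor bounded away from $0$ for large $x$, of the same order; this is where I would be slightly careful and state the asymptotic equivalence $\bar F(x)\sim \alpha\,(s/z)^s$ as $x\to\infty$ (from $1-(1-u)^\alpha\sim\alpha u$ as $u\to0$, with $u=(1-\beta z)^{1/\beta}\to0$), which pins down the order exactly and legitimizes the comparison test at $k=-2/\beta$ as well.

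The one genuine subtlety — the main obstacle — is the boundary case $k=-2/\beta$ and, when $b=0$, the different power of $x$: if $b=0$ then $z\sim ax$ so $\bar F(x)\asymp x^{-s}=x^{1/\beta}$ and the threshold becomes $k\ge -1/\beta$ rather than $-2/\beta$; the statement as written presumes $b>0$ (consistent with the paper's convention elsewhere that the ``linear failure rate'' part has $b\neq0$), so I would either restrict to $b>0$ or note the $b=0$ case reduces to the EGE moment results of \cite{ku-gu-11}. At the critical exponent the integrand is of order $x^{-1}$ (times possibly a constant factor), whose integral to infinity diverges, so ``$k\ge-2/\beta$'' is indeed the correct non-existence range. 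Assembling the comparison estimate with the tail asymptotic then completes the proof.
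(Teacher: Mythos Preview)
Your proposal is correct and follows essentially the same route as the paper: identify the polynomial tail $\bar F(x)\asymp x^{-2s}$ with $s=-1/\beta$, then use an integral representation of $E[X^k]$ in terms of $\bar F$ to conclude divergence when $k\ge 2s=-2/\beta$. The only cosmetic differences are that the paper writes the moment as $\int_0^\infty \bar F(x^{1/k})\,dx$ rather than your layer-cake form $\int_0^\infty kx^{k-1}\bar F(x)\,dx$, and passes from $\alpha=1$ to $\alpha\ge1$ via the stochastic ordering $\bar F_{\alpha_1}\le\bar F_{\alpha_2}$ for $\alpha_1\le\alpha_2$ instead of your asymptotic $1-(1-u)^\alpha\sim\alpha u$.
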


\begin{proof}
Consider $\alpha =1$. Then, the survival function of $X$ is
\[\bar{F}\left(x\right)={\left(1+\frac{z}{s}\right)}^{-s},\]
where $z=ax+\frac{b}{2}x^2$. Since $X$ is a positive random variable, so
\[E\left[X^k\right]=\int^{\infty }_0{P\left(X>x^{{1}/{k}}\right)}dx=\int^{\infty }_0{\overline{F}\left(x^{{1}/{k}}\right)}dx=\int^{\infty }_0{\frac{dx}{{\left(1+\frac{ax^{{1}/{k}}+\frac{b}{2}x^{{2}/{k}}}{s}\right)}^s},}\]
where $s=-\frac{1}{\beta }$. clearly,
$(s+ax^{{1}/{k}}+\frac{b}{2}x^{{2}/{k}})/{s}\sim \frac{b}{2s}x^{{2}/{k }}$,  as $x\to \infty$. The integral $\int^{\infty }_0{x^{{-2s}/{k}}}dx$ diverges when $k\ge 2s$, and therefore, $E\left[X^k\right]$ does not exists.

Consider $F_{\alpha }\left(x\right)={\left(1-{\left(1+\frac{z}{s}\right)}^{-s}\right)}^{\alpha }$. It can be easily seen that if ${\alpha }_1<{\alpha }_2$ then $F_{{\alpha }_1}\left(x\right)>F_{{\alpha }_2}\left(x\right)$ and for the survival function $\bar{F}\left(x\right)$ we have ${\bar{F}}_{{\alpha }_1}\left(x\right)<{\bar{F}}_{{\alpha }_2}\left(x\right)$. So, (at least) the divergence condition $k\ge 2s$, satisfies for $\forall \alpha \ge 1.$
\end{proof}

Based on Remark \ref{rem.moment}, if $k$-th moment of EGLFR distribution exists, we  have an expression for it when $k$ is even. But, we did not find an expression for other cases. In Table \ref{tab.mom}, we calculated some moments of EGLFR distribution for $\beta<0$ when they  exist with considering $a=0$ and $b=1$.

\begin{table}
\begin{center}
\caption{Some moments of EGLFR distribution for $\beta<0$.}\label{tab.mom}
\begin{tabular}{|c|c|cccccc|} \hline
 &  &   \multicolumn{6}{|c|}{$\beta $} \\ \hline
$\alpha $ & $k$ & -1.90 & -1.00 & -0.75 & -0.50 & -0.25 & -0.10 \\ \hline
1 & 1 & 20.195 & 2.221 & 1.829 & 1.571 & 1.388 & 1.303 \\
 & 2 & --- & --- & 8.000 & 4.000 & 2.667 & 2.222 \\
 & 3 & --- & --- & --- & 18.849 & 6.664 & 4.598 \\
 & 4 & --- & --- & --- & --- & 21.333 & 11.111 \\
 & 5 & --- & --- & --- & --- & 88.857 & 30.656 \\ \hline
2 & 1 & 38.886 & 3.332 & 2.617 & 2.159 & 1.846 & 1.702 \\
 & 2 & --- & --- & 14.400 & 6.667 & 4.190 & 3.391 \\
 & 3 & --- & --- & --- & 35.342 & 11.610 & 7.732 \\
 & 4 & --- & --- & --- & --- & 39.619 & 19.883 \\
 & 5 & --- & --- & --- & --- & 171.467 & 57.127 \\ \hline
\end{tabular}
\end{center}
\end{table}

\subsection{Entropy}

The entropy of random variable is defined in terms of its probability distribution and can be shown to be a good measure of randomness or uncertainty. The Shannon's entropy of a continuous random variable $Y$ with pdf $f(y)$ is defined by
\cite{shan-48}
as
\begin{equation*}
H_{Sh} (f)= - E_f [\log f(Y)]= - \int_{0}^{\infty} f(y) \log f(y) dy.
\end{equation*}
Since the pdf of EGLFR distribution can be written as
\begin{eqnarray}
f(x)=\alpha f_{1}(x)[F_{1}(x)]^{\alpha-1}
\end{eqnarray}
where $f_{1}$ and $F_{1}$ are pdf and cdf  of ELFR distribution, respectively, given in Proposition \ref{prop.elfr},
the Shannon entropy for EGLFR distribution can be expressed in the form
\begin{eqnarray}\label{eq.HSH}
H_{Sh} (f)= &=&-\ln(\alpha)+\frac{\alpha-1}{\alpha}- E_{f}[\ln f_{1}(X)]\nonumber\\
&=&-\ln(\alpha)+\frac{\alpha-1}{\alpha}-E_{W}[\ln f_{1}(F_{1}^{-1}(W))],
\end{eqnarray}
where $W$ has the beta distribution with parameters $\alpha$ and 1.
The last  term in \eqref{eq.HSH} follows immediately from the conditions in Lemma 1 of
\cite{zo-ba-09}.
The R\'{e}nyi entropy is defined by
\begin{equation*}
H_{\rho} (f)= \frac{1}{1-\rho}  \log (  \int_{-\infty}^{\infty} [f(y)]^{\rho} dy ),
\end{equation*}
where $\rho >0$ and $\rho \neq 1$. The Shannon entropy is derived from $\lim_{\rho \rightarrow 1} H_{\rho} (f)$.
An explicit expression of R\'{e}nyi entropy for EGLFR distribution is obtained as
\begin{eqnarray*}
H_{\rho} (f) &=&\frac{-\rho}{\rho-1}\ln(\alpha)-\frac{1}{\rho-1}\ln B(\rho(\alpha-1)+1,1)-\frac{1}{\rho-1}\ln E_{T}[ f_{1}^{\rho-1}(F_{1}^{-1}(T))],
\end{eqnarray*}
where $T$ has the beta distribution with parameters $\rho(\alpha-1)+1$ and 1.

%

\subsection{Characterization}

Using the following theorem, we can  characterize the EGLFR distribution.

\begin{theorem} The random variable $X$ follows ${\rm EGLFR}(\alpha ,\beta ,a,b)$ if and only if for all real $t>0$, and for all non-negative integer $n$
\[{\delta }^{\left(n\right)}\left(t\right)=U^n\left(t\right)+\frac{n}{\alpha }{\delta }^{\left(n-1\right)}\left(t\right),\]
where ${\delta }^{\left(n\right)}\left(t\right)=E\left(U^n\left(X\right)\left | X<t\right.\right)$ and
$U\left(t\right)=-{\log  \left({red}D(t)\right)}=-{\log  (1-{(1-\beta (at+\frac{b}{2}t^2))}^{{1}/{\beta }})}.$
\end{theorem}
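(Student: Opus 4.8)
The plan is to prove both directions of the characterization by working with the conditional distribution of $U(X)$ given $X < t$. First I would record the key observation underlying everything: if $X \sim {\rm EGLFR}(\alpha,\beta,a,b)$, then by the form of the cdf in \eqref{eq.FEGL}, $F_1(t) := 1 - (1-\beta(at+\tfrac{b}{2}t^2))^{1/\beta}$ is precisely the ELFR cdf of Proposition \ref{prop.elfr}, and $F(t) = F_1(t)^\alpha$. Hence $U(t) = -\log F_1(t)$, and on the event $\{X < t\}$ the variable $F_1(X)/F_1(t)$ has cdf $s \mapsto s^\alpha$ on $(0,1)$, i.e.\ a ${\rm Beta}(\alpha,1)$ law. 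Writing $W = F_1(X)/F_1(t)$, we get $U(X) = U(t) - \log W$, so $U(X) \mid (X<t)$ is distributed as $U(t) + E/\alpha$ where $E$ is a unit-rate exponential (since $-\log W \sim {\rm Exp}(\alpha)$ when $W \sim {\rm Beta}(\alpha,1)$). This reduces $\delta^{(n)}(t)$ to a moment of a shifted exponential, which is exactly the structure the recursion encodes.

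For the ``only if'' direction I would then just verify the recursion. With $E \sim {\rm Exp}(\alpha)$ and $u = U(t)$, we have $\delta^{(n)}(t) = E\big[(u + E/\alpha)^n\big]$. Expanding and using $E[E^k] = k!/\alpha^k$, or more cleanly integrating by parts in $E\big[(u+E/\alpha)^n \alpha e^{-\alpha e'}\big]$, gives $E\big[(u+E/\alpha)^n\big] = u^n + \tfrac{n}{\alpha} E\big[(u+E/\alpha)^{n-1}\big]$, which is $\delta^{(n)}(t) = U^n(t) + \tfrac{n}{\alpha}\,\delta^{(n-1)}(t)$. The $n=0$ base case reads $\delta^{(0)}(t) = 1 = U^0(t)$, consistent with the convention that the empty correction term vanishes.

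The ``if'' direction is the substantive part, and the main obstacle is showing that the family of identities for all $n \ge 0$ pins down the conditional law of $U(X)$ given $X<t$, and thence the distribution of $X$. Here is the route I would take. Fix $t$ and let $\mu_t$ be the conditional distribution of $U(X)$ given $X<t$, supported on $[U(t),\infty)$ since $U$ is increasing in $x$ on the support (note $F_1$ is increasing, so $U$ is too). The hypothesis says $\int v^n\,d\mu_t(v) = U(t)^n + \tfrac{n}{\alpha}\int v^{n-1}\,d\mu_t(v)$ for every $n$. I would show this moment recursion forces $\mu_t$ to be the law of $U(t) + E/\alpha$ with $E \sim {\rm Exp}(1)$: the recursion determines all moments of $\mu_t$ uniquely from $U(t)$, and those moments coincide with the (finite, super-exponentially small) moments of the shifted exponential, whose distribution is determined by its moments (Carleman's condition holds for the exponential). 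Alternatively, and more robustly, differentiating a generating-function version $M_t(s) = \int e^{sv}\,d\mu_t(v)$ turns the recursion into the ODE $M_t'(s) = U(t) M_t(s)\cdot(\text{shift}) + \tfrac{1}{\alpha} M_t(s)$-type relation whose unique solution with $M_t(0)=1$ is $e^{sU(t)}\cdot \tfrac{\alpha}{\alpha - s}$; I would lean on whichever of these is cleanest once the algebra is in hand.

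Finally, from $\mu_t = \mathcal{L}(U(t) + E/\alpha)$ for every $t$ I would recover the cdf of $X$. Unwinding, $P(U(X) > U(t) + y \mid X < t) = e^{-\alpha y}$, i.e.\ $P\big(F_1(X) < e^{-y} F_1(t) \mid X < t\big) = e^{-\alpha y}$, so $F_1(X)/F_1(t) \sim {\rm Beta}(\alpha,1)$ conditionally on $\{X<t\}$ for every $t$. Letting $t$ increase to the right endpoint $c$ of the support (so $F_1(t)\to 1$ and the conditioning event has probability $\to 1$), this yields $P(F_1(X) \le s) = s^\alpha$ for all $s\in(0,1)$, hence $F_X(x) = F_1(x)^\alpha$, which is exactly the cdf in \eqref{eq.FEGL}. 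Therefore $X \sim {\rm EGLFR}(\alpha,\beta,a,b)$, completing the proof.
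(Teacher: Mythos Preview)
Your necessity argument is the same as the paper's: both reduce to an integration by parts on $\int_0^{D(t)}(-\log z)^n\,dz^\alpha$, you merely phrase it probabilistically via the observation that $F_1(X)/F_1(t)\mid\{X<t\}\sim{\rm Beta}(\alpha,1)$, equivalently $U(X)\mid\{X<t\}\stackrel{d}{=}U(t)+E/\alpha$ with $E$ standard exponential. One harmless slip: you write that $U$ is increasing because $F_1$ is, but $U=-\log F_1$ is \emph{decreasing}; your support claim $[U(t),\infty)$ and the identity $U(X)=U(t)-\log W$ are nevertheless correct, so nothing downstream breaks.

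Your sufficiency argument, however, is genuinely different from the paper's. The paper does not attempt a direct moment-determinacy argument; instead it invokes \cite{sa-ku-09}, who show that the recursion $\delta^{(n)}(t)=U^n(t)+\tfrac{n}{\alpha}\delta^{(n-1)}(t)$ forces the reversed hazard to satisfy $r(x)=f(x)/F(x)=\alpha\,r(x;1,\beta,a,b)$, and then appeals to the fact that the reversed hazard determines the distribution. Your route---showing that the recursion pins down all conditional moments of $U(X)\mid\{X<t\}$, matching them to a shifted exponential, invoking Carleman's condition (which holds since the $n$th moment grows like $n!/\alpha^n$), and finally letting $t$ tend to the right endpoint so that $F_1(X)\sim{\rm Beta}(\alpha,1)$ unconditionally---is self-contained and does not rely on the external reference. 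The paper's approach is shorter on the page because it outsources the work; yours makes the mechanism (moment determinacy of the exponential) explicit and would go through without citing \cite{sa-ku-09}. The generating-function alternative you sketch is unnecessary once Carleman is in hand, so I would drop it.
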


\begin{proof}

\noindent \textit{Necessity condition: }

 Let $F$ and $f$ be the cdf and pdf of ${\rm EGLFR}(\alpha ,\beta ,a,b)$, respectively. Also,  let $d(x)$ be the derivative function of  $D(x)$. It can be shown that
\begin{eqnarray*}
{\delta }^{\left(n\right)}\left(t\right)
&=&\frac{\alpha }{F\left(t\right)}\int^t_0{{\left[-{\log  \left(D(x)\right)}\right]}^nd(x){\left[D(x)\right]}^{\alpha -1}dx}
\\
&=&\frac{\alpha }{F\left(t\right)}\int^{D\left(t\right)}_0{{\left[-{\log  \left(z\right)}\right]}^nz^{\alpha -1}dz}
\\
&=&\frac{1}{F\left(t\right)}\int^{D\left(t\right)}_0{{\left[-{\log  \left(z\right)}\right]}^ndz^{\alpha }}
\\
&=&{\left.\frac{{\left[-{\log  \left(z\right) }\right]}^nz^{\alpha }}{F\left(t\right)}\right|}^{D\left(t\right)}_0+\frac{n}{F\left(t\right)}\int^{D\left(t\right)}_0{{\left[-{\log  \left(z\right) }\right]}^{n-1}z^{\alpha -1}dz}
\\
&=&U^n\left(t\right)+\frac{n}{\alpha }{\delta }^{\left(n-1\right)}\left(t\right).
\end{eqnarray*}

\noindent \textit{Sufficiency condition:}
\cite{sa-ku-09}
showed that ${\delta }^{\left(n\right)}\left(t\right)=U^n\left(t\right)+\frac{n}{\alpha }{\delta }^{\left(n-1\right)}\left(t\right),$ implies that
\[r\left(x\right)=\frac{f\left(x\right)}{F\left(x\right)}=\alpha \frac{f\left(x;1,\beta ,a,b\right)}{F\left(x;1,\beta ,a,b\right)}=\alpha r\left(x;1,\beta ,a,b\right).\]
Therefore, from the uniqueness property of reversible hazard function, we can conclude that $X$ follows a ${\rm EGLFR}(\alpha ,\beta ,a,b)$.
\end{proof}

\section{ Estimation}

\label{sec.mle}
In this section, we discuss the MLE of the parameters of the model when $\beta \ne 0$. Consider $X_1,\dots,$ $X_n$ is a random sample from EGLFR distribution with vector parameter ${\boldsymbol \theta }=(\alpha ,\beta ,a,b)'$. The log-likelihood function based on this random sample is given as
\begin{equation}\label{eq.lik}
\ell ({\boldsymbol \theta })=n\log(\alpha)  +\sum^n_{i=1}{{\log (a+bx_i)}}+(\frac{1}{\beta }-1)\sum^n_{i=1}{{\log (1-\beta z_i) }}+(\alpha -1)\sum^n_{i=1}{{\log  \left(1-{(1-\beta z_i)}^{\frac{1}{\beta }}\right) }},
\end{equation}
where $z_i=ax_i+\frac{b}{2}x^2_i$. The log-likelihood can be maximized either directly or by solving the nonlinear likelihood equations obtained by differentiating \eqref{eq.lik}. The components of the score vector
$U\left({\boldsymbol \theta }\right)={\left(U_{\alpha }\left({\boldsymbol \theta }\right),U_{\beta }\left({\boldsymbol \theta }\right),U_a\left({\boldsymbol \theta }\right),U_b\left({\boldsymbol \theta }\right)\right)}^T$
are given by
\begin{eqnarray*}
U_{\alpha }\left({\boldsymbol \theta }\right)&=&\frac{\partial \ell \left({\boldsymbol \theta }\right)}{\partial\alpha }=\frac{n}{\alpha }+\sum^n_{i=1}{{\log  \left(1-{\left(1-\beta z_i\right)}^{\frac{1}{\beta }}\right) }},\\
U_{\beta }\left({\boldsymbol \theta }\right)&=&\frac{\partial \ell \left({\boldsymbol \theta }\right)}{\partial\beta }=-\frac{1}{{\beta }^2}\sum^n_{i=1}{{\log  \left(1-\beta z_i\right) }}-\left(\frac{1}{\beta }-1\right)\sum^n_{i=1}{\frac{z_i}{1-\beta z_i}}\\
&&+\frac{\alpha -1}{{\beta }^2}\sum^n_{i=1}{\frac{{\left(1-\beta z_i\right)}^{\frac{1}{\beta }-1}}{1-{\left(1-\beta z_i\right)}^{\frac{1}{\beta }}}[{\beta z}_i+\left(1-\beta z_i\right){\log  \left(1-\beta z_i\right) }]},\\
U_a\left({\boldsymbol \theta }\right)&=&\frac{\partial \ell \left({\boldsymbol \theta }\right)}{\partial a}=\sum^n_{i=1}{\frac{1}{a+bx_i}}-\left(\frac{1}{\beta }-1\right)\sum^n_{i=1}{\frac{\beta x_i}{1-\beta z_i}}+\left(\alpha -1\right)\sum^n_{i=1}{\frac{x_i{\left(1-\beta z_i\right)}^{\frac{1}{\beta }-1}}{1-{\left(1-\beta z_i\right)}^{\frac{1}{\beta }}}},\\
U_b\left({\boldsymbol \theta }\right)&=&\frac{\partial \ell \left({\boldsymbol \theta }\right)}{\partial b}=\sum^n_{i=1}{\frac{x_i}{a+bx_i}}-\left(\frac{1}{\beta }-1\right)\sum^n_{i=1}{\frac{\beta x^2_i}{2\left(1-\beta z_i\right)}}+\left(\alpha -1\right)\sum^n_{i=1}{\frac{x^2_i{\left(1-\beta z_i\right)}^{\frac{1}{\beta }-1}}{2\left(1-{\left(1-\beta z_i\right)}^{\frac{1}{\beta }}\right)}}.
\end{eqnarray*}

For given $\beta $, $a$, and $b$, the MLE of parameter $\alpha $ is
\[\hat{\alpha }=-\frac{n}{\sum^n_{i=1}{{\log  \left(1-{\left(1-\beta z_i\right)}^{\frac{1}{\beta }}\right) }}}.\]
By maximizing the profile log-likelihood $\ell \left(\hat{\alpha },\beta ,a,b\right)$ with respect to $\beta ,a,b$, the MLE of these parameters can be obtained.

\subsection{ The regular case}

When $\beta \le 0$, the support of the EGLFR distribution is in interval $\left(0,\infty \right)$, and in this case the asymptotic distribution of the MLE of vector parameter ${\boldsymbol \theta }=\left(\alpha ,\beta ,a,b\right)'$ is multivariate normal distribution as
\[\sqrt{n}(\hat{{\boldsymbol \theta }}-{\boldsymbol \theta }){{\stackrel{\ \ \ d\ \ \ }{\longrightarrow}}}N_4\left({\boldsymbol 0},I^{-1}\right),\]
where $I$ is the Fisher information matrix.

\subsection{The non-regular case}

When $\beta >0$, the support of the EGLFR distribution is in interval $\left(0,\psi \right)$. So, in this case the standard asymptotic normality distribution of the MLE's of parameters does not hold. The asymptotic distributions are proposed by
\cite{ku-gu-11}
 for $b=0$. Here, similar results are given when $b>0$.  The general approach is given by \cite{smith-85}, and is used in literature  \citep[for example][]{ng-lu-hu-du-12}.

Take $G(x)= ax+\frac{b}{2}x^2$. Then $G^{-1}(\frac{1}{\beta })=\psi $, and the pdf of EGLFR distribution can be written as
\begin{equation}\label{eq.fegl2}
f(x;\alpha ,a,b,\psi)=\alpha {\rm g}(x){(1-t)}^{G(\psi)-1}{\left(1-{\left(1-t\right)}^{G\left(\psi \right)}\right)}^{\alpha -1},\ \ \ x\le \psi ,
\end{equation}
where $t=G\left(x\right)/G\left(\psi \right)$, and${\rm g}\left(x\right)$ is the derivative of $G\left(x\right)$ with respect to $x$. So, the log-likelihood function correspond to the pdf in \eqref{eq.fegl2} is
\begin{eqnarray*}
\ell (\alpha,a,b,\psi )&=&n{\log (\alpha )}+\sum^n_{i=1}{{\log \left({\rm g}(x_{(i)})\right)}}+\left(G\left(\psi \right)-1\right)\sum^n_{i=1}{{\log  \left(1-t_{(i)}\right)}}\\
&&
+\left(\alpha -1\right)\sum^n_{i=1}{{\log  \left(1-{\left(1-t_{(i)}\right)}^{G(\psi )}\right) }},
\end{eqnarray*}
where $t_{(i)}=G(x_{(i)})/G\left(\psi \right)$, and $x_{(1)},x_{\left(2\right)},\dots ,x_{\left(n\right)}$ are the ordered statistics of random sample $x_1,x_2,\dots ,x_n$. As mentioned in
\cite{smith-85},
at first, we find the MLE of the threshold parameter of the model. For the EGLFR distribution, the MLE of the threshold parameter $\psi $is $\tilde{\psi }=x_{\left(n\right)}$. Then, the modified log-likelihood function based on the remaining $(n-1)$ observations is
\begin{eqnarray*}
\ell (\alpha ,a,b,\tilde{\psi })&=&(n-1){\log (\alpha)}+\sum^{n-1}_{i=1}{{\log  \left({\rm g}(x_{(i)})\right)
 }}+(G(\tilde{\psi })-1)\sum^{n-1}_{i=1}{{\log  \left(1-{\tilde{t}}_{(i)}\right)}}
\\
&&
+\left(\alpha -1\right)\sum^{n-1}_{i=1}{{\log  \left(1-{(1-{\tilde{t}}_{(i)})}^{G\left(\tilde{\psi }\right)}\right) }},
\end{eqnarray*}
where ${\tilde{t}}_{(i)}=G(x_{\left(i\right)})/G\left(\tilde{\psi }\right)$. For more information about the modified likelihood function, refer to
\cite{smith-85}.
For fixed $a$ and $b$ the modified MLE of parameter $\alpha $ is obtained as
\[\tilde{\alpha }=-\frac{n-1}{\sum^{n-1}_{i=1}{{\log  \left(1-{(1-{\tilde{t}}_{(i)})}^{G\left(\tilde{\psi }\right)}\right)}}}.\]
By maximizing the modified log-likelihood $\ell \left(\tilde{\alpha },a,b,\tilde{\psi }\right)$ with respect to $a$, and $b$, the modified MLE of these parameters can be obtained.

\begin{theorem}
The asymptotic distribution of $\tilde{\psi }$ is
\[n^{1/G(\psi )}(\tilde{\psi }-\psi ){{\stackrel{\  \ \ d\  \ \ }{\longrightarrow}}}-\frac{G\left(\psi \right)}{\left(a+b\psi \right)}V^{1/G\left(\psi\right)},\]
where the random variable $V$ is distributed as exponential distribution with mean $\frac{1}{\alpha }$.
\end{theorem}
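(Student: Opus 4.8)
The plan is to recognize that $\tilde\psi=X_{(n)}$ is the sample maximum and that $\psi$ is the finite right endpoint of the support, so the result should follow from classical extreme-value asymptotics once the tail behaviour of $F$ near $\psi$ is pinned down. First I would compute the leading order of $1-F$ as $x\uparrow\psi$. Writing $z=G(x)$ and $t=G(x)/G(\psi)$ as in \eqref{eq.fegl2}, and using $G(\psi)=1/\beta$, we have $F(x)=\left(1-(1-t)^{G(\psi)}\right)^{\alpha}$; since $1-(1-u)^{\alpha}\sim\alpha u$ as $u\to 0^{+}$ and, by a first-order Taylor expansion of the smooth function $G$ at $\psi$, $G(\psi)-G(x)\sim G'(\psi)(\psi-x)=(a+b\psi)(\psi-x)$, this yields
\[1-F(x)\sim \alpha\left(\frac{a+b\psi}{G(\psi)}\right)^{G(\psi)}(\psi-x)^{G(\psi)},\qquad x\uparrow\psi.\]

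Next I would feed this into the standard limit law for a maximum. For $y>0$ put $x_{n}=\psi-y\,n^{-1/G(\psi)}$; by continuity of $F$ and independence,
\[P\!\left(n^{1/G(\psi)}(\psi-\tilde\psi)>y\right)=F(x_{n})^{n}=\left(1-\frac{\alpha}{n}\left(\frac{a+b\psi}{G(\psi)}\right)^{G(\psi)}y^{G(\psi)}+o\!\left(\tfrac1n\right)\right)^{n}\longrightarrow \exp\!\left(-\alpha\left(\frac{a+b\psi}{G(\psi)}\right)^{G(\psi)}y^{G(\psi)}\right),\]
so $n^{1/G(\psi)}(\psi-\tilde\psi)$ converges in distribution to a nonnegative variable $W$ with $P(W>y)=\exp\!\left(-\alpha(cy)^{G(\psi)}\right)$, where $c=(a+b\psi)/G(\psi)$. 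Finally I would identify this reversed-Weibull limit with the stated form: setting $V=(cW)^{G(\psi)}$ gives $P(V>v)=P\!\left(W>c^{-1}v^{1/G(\psi)}\right)=e^{-\alpha v}$, i.e.\ $V$ is exponential with mean $1/\alpha$, whence $W=c^{-1}V^{1/G(\psi)}=\frac{G(\psi)}{a+b\psi}V^{1/G(\psi)}$; multiplying by $-1$ gives $n^{1/G(\psi)}(\tilde\psi-\psi)\stackrel{d}{\longrightarrow}-\frac{G(\psi)}{a+b\psi}V^{1/G(\psi)}$, as claimed.

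The main obstacle is making the first step quantitatively uniform: one must check that the asymptotic equivalence for $1-F$ can be upgraded to $1-F(\psi-y\,n^{-1/G(\psi)})=\frac{\alpha}{n}c^{G(\psi)}y^{G(\psi)}(1+o(1))$ with the $o(1)$ uniform for $y$ in compact subsets of $(0,\infty)$, so that the $(1-c_{n}/n)^{n}\to e^{-c}$ step is legitimate. This follows from the smoothness of $G$ and of $u\mapsto 1-(1-u)^{\alpha}$ near the relevant points; alternatively, one may bypass it by invoking the general domain-of-attraction criterion that a distribution with $1-F(x)\sim K(\psi-x)^{\gamma}$ at a finite endpoint lies in the max-domain of attraction of the reversed Weibull with shape parameter $\gamma$ (here $\gamma=G(\psi)$), exactly the route used by \cite{smith-85} and by \cite{ku-gu-11} for the case $b=0$.
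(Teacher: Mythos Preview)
Your argument is correct, but the paper proceeds along a different line. Rather than expanding the tail $1-F$ near $\psi$ and appealing to extreme-value domain-of-attraction results, the paper exploits an exact distributional identity: since $F(x)=\bigl(1-(1-t)^{G(\psi)}\bigr)^{\alpha}$ with $t=G(x)/G(\psi)$, one has $G(X)/G(\psi)\stackrel{d}{=}1-(1-U^{1/\alpha})^{1/G(\psi)}$ for $U$ standard uniform, so that $n^{1/G(\psi)}\bigl(G(X_{(n)})/G(\psi)-1\bigr)\stackrel{d}{=}-\bigl(n(1-U_{(n)}^{1/\alpha})\bigr)^{1/G(\psi)}$ with $U_{(n)}\sim\mathrm{Beta}(n,1)$; the elementary limit $P\bigl(n(1-U_{(n)}^{1/\alpha})\le x\bigr)\to 1-e^{-\alpha x}$ then gives the limit for $G(X_{(n)})-G(\psi)$ directly, and a final delta-method step transfers it to $X_{(n)}-\psi$ via $G'(\psi)=a+b\psi$. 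Your tail-expansion route has the advantage of being self-contained extreme-value reasoning that would apply verbatim to any smooth reparametrization of the support, and you correctly flag the uniformity issue needed for the $(1-c_n/n)^n$ step; the paper's route avoids that analytic check entirely by working with an equality in distribution rather than an asymptotic equivalence, at the cost of needing the explicit inversion of $F$ and an appeal to the delta method at the end.
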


\begin{proof} The corresponding cdf of the pdf in \eqref{eq.fegl2} is
\[{F\left(x\right)=\left(1-{(1-t)}^{G\left(\psi \right)}\right)}^{\alpha }.\]
So, $G\left(X\right)/G\left(\psi \right)$ is distributed as
$1-{\left(1-U^{1/\alpha }\right)}^{1/G\left(\psi \right)}$ where $U$ has a standard uniform distribution. Therefore, $n^{1/G(\psi )}(\frac{G\left(X_{\left(n\right)}\right)}{G\left(\psi \right)}-1)$ is distributed as $-n^{1/G\left(\psi \right)}{(1-U^{1/\alpha }_{\left(n\right)})}^{1/G\left(\psi \right)}$where $U_{(n)}$ has a beta distribution with parameters $n$ and 1. Now, we have
\[{\mathop{\lim }_{n\to \infty } P\left(n(1-U^{1/\alpha }_{\left(n\right)})\le x\right) }=1-{\rm e}^{-\alpha x},\]
and then $n^{1/G\left(\psi \right)}\left(G(X_{\left(n\right)})-G\left(\psi \right)\right){{\stackrel{\ \ \ d\ \ \ }{\longrightarrow}}}-G\left(\psi \right)V^{1/G\left(\psi \right)}$. By using the delta method and the relation between the derivatives of the $G\left(x\right)$ and $G^{-1}\left(x\right)$, the proof is completed.
\end{proof}

\begin{theorem}
a) Conditioning on $X_{\left(n\right)}$, the asymptotic distribution of the modified MLE, $(\tilde{\alpha },\tilde{a},\tilde{b})$ is multivariate normal distribution.\\
b)The asymptotic distribution of $(\tilde{\alpha },\tilde{a},\tilde{b})$ is (i) multivariate normal if $G\left(\psi \right)<\frac{1}{2}$, (ii) multivariate Weibull if $G\left(\psi \right)>\frac{1}{2}$, and (iii) a mixture of normal and Weibull if $G\left(\psi \right)=\frac{1}{2}$.
\end{theorem}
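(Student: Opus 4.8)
The plan is to invoke the general theory of maximum likelihood estimation for distributions with a parameter-dependent endpoint, as developed by \cite{smith-85}, and to verify that the EGLFR density in the form \eqref{eq.fegl2} satisfies the hypotheses of that theory with the relevant ``regularity index'' equal to $G(\psi)$. The key observation driving everything is that near the right endpoint $x=\psi$, writing $t=G(x)/G(\psi)$, the survival function behaves like a constant times $(1-t)^{G(\psi)}\sim\bigl(\tfrac{a+b\psi}{G(\psi)}\bigr)^{G(\psi)}(\psi-x)^{G(\psi)}$, so the density vanishes (or blows up) like $(\psi-x)^{G(\psi)-1}$. Thus the local exponent governing the endpoint is precisely $\rho:=G(\psi)$, and Smith's dichotomy is stated exactly in terms of whether $\rho$ is less than, greater than, or equal to $\tfrac12$ (equivalently $2$ in his normalization, up to how one indexes; here the density exponent is $\rho-1$ so the threshold is $\rho=\tfrac12$).

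For part (a): condition on $X_{(n)}=x_{(n)}$, so that $\tilde\psi=x_{(n)}$ is held fixed. Then the modified log-likelihood $\ell(\alpha,a,b,\tilde\psi)$ based on the remaining $n-1$ order statistics is, as a function of the interior parameters $(\alpha,a,b)$, a genuinely smooth likelihood on a fixed domain: the troublesome endpoint has been removed by plugging in $\tilde\psi$. First I would check the usual Cramér-type regularity conditions for this conditional likelihood — the log-density is three-times differentiable in $(\alpha,a,b)$ on the interior, the score has mean zero, and the conditional Fisher information matrix $I^*(\alpha,a,b\mid\psi)$ is finite and positive definite (using that $a+b\psi>0$, so $\mathrm{g}(x)=a+bx$ is bounded away from $0$ on the support, and $\alpha\ge$ some positive bound). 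Then a standard Taylor expansion of the conditional score around the true value, combined with the conditional CLT for the i.i.d.\ summands and the conditional law of large numbers for the observed information, yields $\sqrt{n}\bigl((\tilde\alpha,\tilde a,\tilde b)-(\alpha,a,b)\bigr)\xrightarrow{d}N_3\bigl(\mathbf0,\,I^{*-1}\bigr)$ conditionally on $X_{(n)}$. The only subtlety is that $\tilde t_{(i)}=G(x_{(i)})/G(x_{(n)})$ depends on $x_{(n)}$, but since $x_{(n)}\to\psi$ a.s.\ this perturbation is asymptotically negligible and does not affect the limiting normal law.

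For part (b): this is where I would lean on \cite{smith-85} wholesale. Having identified the endpoint exponent as $\rho=G(\psi)$ via the local expansion above, the unconditional joint behavior of $(\tilde\alpha,\tilde a,\tilde b)$ together with $\tilde\psi=X_{(n)}$ is governed by his classification: when $\rho<\tfrac12$ the endpoint is estimated so fast (rate $n^{1/\rho}$, faster than $\sqrt n$) that it is asymptotically independent of the interior estimators and contributes nothing, giving joint multivariate normality at the usual $\sqrt n$ rate; when $\rho>\tfrac12$ the endpoint estimator $\tilde\psi$ converges more slowly than $\sqrt n$ and its (Weibull-type, from the previous theorem, $V^{1/G(\psi)}$) limiting fluctuation propagates into the interior estimators through the cross-information terms, producing a multivariate Weibull-type limit; and when $\rho=\tfrac12$ the two rates coincide and one gets a mixture. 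I would make this precise by writing the joint score for $(\alpha,a,b,\psi)$, substituting $\tilde\psi=X_{(n)}$ with the limit law from the preceding theorem, and reading off which terms survive in each regime.

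\textbf{Main obstacle.} The genuinely hard part is part (b): one must carefully track how the randomness in $\tilde\psi=X_{(n)}$ feeds back into $(\tilde\alpha,\tilde a,\tilde b)$ through the profile/modified likelihood, and justify the three-way rate comparison rigorously rather than by analogy. In particular, one must compute the asymptotic order of the mixed second derivatives $\partial^2\ell/\partial\psi\,\partial(\alpha,a,b)$ and show they scale as a power of $n$ matching $n^{1/\rho-1/2}$, so that the contribution of $(\tilde\psi-\psi)$ to the interior estimators is of the right order; this is exactly the delicate bookkeeping that \cite{smith-85} carries out in his general framework, and the proof here should reduce to checking that the EGLFR density meets his conditions (smoothness of $\mathrm{g}$, the precise endpoint exponent $\rho=G(\psi)$, and the non-degeneracy of the relevant information quantities) rather than reproving the limit theorems from scratch.
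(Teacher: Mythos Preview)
Your proposal is consistent with the paper's approach: the paper's entire proof is a one-line referral to \cite{ku-gu-11}, who in turn rely on the non-regular MLE framework of \cite{smith-85}, so both routes ultimately reduce to verifying that the EGLFR density fits Smith's setup with endpoint exponent $G(\psi)$. Your sketch is considerably more detailed than what the paper actually provides---in particular, your identification of the local behavior $(\psi-x)^{G(\psi)-1}$ and the rate-comparison argument for the three regimes is exactly the content one would need to extract from those references, and there is no gap.
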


\begin{proof}
 For the proof of this theorem, reader can be refer to
 \cite{ku-gu-11}.
\end{proof}

\section{ A real example}
\label{sec.ex}

The following data set is given by
\cite{Aarset-87}
and represents the lifetimes of 50 devices.
\cite{sa-ku-09}
 and
\cite{si-or-co-10}
 also analyzed this data.

\begin{center}
0.1 0.2 1 1 1 1 1 2 3 6 7 11 12 18 18 18 18 1821 32 36 40 45 46 47 50 55

 60 63 63 67 67 67 67 72 7579 82 82 83 84 84 84 85 85 85 85 85 86 86
\end{center}
Using this data, we obtained the MLE's of parameters of six distributions: extended generalized linear failure rate (EGLFR), extended generalized exponential (EGE), extended generalized Rayleigh (EGR),  generalized linear failure rate (GLFR),  generalized exponential (GE), and  generalized Rayleigh (GR)   distributions. The results are given in  Table \ref{tab.ex}.

Based on the MLE's of parameters, we calculated minus of log-likelihood function \break ($-\log⁡(L)$), Kolmogorov-Smirnov (K-S) statistic with its p-value, Akaike information criterion (AIC), Akaike information criterion corrected (AICC), Bayesian information criterion (BIC) and likelihood ratio test (LRT) with its p-value. Table \ref{tab.ex} indicates that the GLFR and GE models are not suitable for this data set based on K-S statistic. Also, The EGLFR model has the lowest $-\log ⁡(L)$, AIC, AICC and BIC values among all fitted models. A comparison of the proposed distribution with some of its sub-models using p-value of LRT shows that the EGLFR model yields a better fit than the other five distributions to this real data set.

The beta modified Weibull (BMW) distribution was introduced by
\cite{si-or-co-10}
and has the following pdf:
\[f_{BMW}\left(x\right)=\frac{\alpha x^{\gamma }\left(\gamma +\beta x\right)}{B\left(a,b\right)}{\rm e}^{\beta x-b\alpha x^{\gamma }{\rm e}^{\beta x}}{\left(1-{\rm e}^{-\alpha x^{\gamma }{\rm e}^{\beta x}}\right)}^{a-1},\ \ \ \ x>0,\]
where $B\left(a,b\right)$ is the beta function with $\alpha ,\beta ,\gamma >0$ and $0<a,b<1$. This distribution contains several important distributions such as Weibull, modified Weibull \citep{la-xi-mu-03}, generalized exponential \citep{gu-ku-99}, beta Weibull \citep{le-fa-ol-07} and generalized modified Weibull \citep{ca-or-co-08} distributions.
\cite{si-or-co-10}
showed that the BMW distribution produces a better fit than its sub-models for this data set. In Table \ref{tab.ex}, we present the MLE's, $-{\log  (L)}$, and other criteria for the BMW distribution, and we can conclude that the EGLFR, EGR and EGE have better  fit than the BMW distribution for this data set.

In Figure \ref{fig.ex1}, we plot the histogram of this data set and the estimated pdf of the seven models. Moreover, the plots of empirical cdf of the data set and estimated cdf of the five models are displayed in Figure \ref{fig.ex1}. Again, we can conclude that EGLFR distribution is a very satisfactory model for this data set.

There are various extensions of GLFR distribution in literature. The pdf of seven  extensions of this distribution are given as follows. Here, we consider ${\rm g} (x ) =\left(a+bx\right){\rm e}^{- \left(ax+\frac{b}{2}x^2\right)}$ and $G(x)=1-{\rm e}^{-\left(ax+\frac{b}{2}x^2\right)}$, where $a>0$ and $b>0$.

\bigskip
\noindent\textbf{Beta LFR (BLFR) distribution}

This distribution is proposed by \cite{ja-ma-2012}
 with the following pdf:
\[f_{{\rm BLFR}}\left(x\right)=\frac{{\rm g}\left(x\right)}{B\left(\alpha ,\beta \right)}{\left(G(x)\right)}^{\alpha -1}{\left(1-G\left(x\right)\right)}^{\beta -1},\ \ \ \ x>0,\]
where $\alpha >0$ and $\beta >0$.

\bigskip
\noindent \textbf{Kumaraswamy GLFR (KGLFR) distribution }

This distribution is proposed by
\cite{elbatal-13}
 with the following pdf:
\[f_{{\rm KGLFR}}\left(x\right)=\alpha \beta {\rm g}\left(x\right){\left(G(x)\right)}^{\alpha -1}{\left(1-{\left(G(x)\right)}^{\alpha }\right)}^{\beta -1},\ \ \ \ x>0,\]
where $\alpha =c\theta $, and $\theta >0$, $c>0$ and $\beta >0$.

\bigskip
\noindent\textbf{McDonald GLFR (MCGLFR) distribution }

This distribution is proposed by
\cite{el-me-ma-14}
 with the following pdf:
\[f_{{\rm MC}{\rm GLFR}}\left(x\right)=\frac{\gamma {\rm g}\left(x\right)}{B\left(\alpha ,\beta \right)}{\left(G(x)\right)}^{\alpha \gamma -1}{\left(1-{\left(G(x)\right)}^{\gamma }\right)}^{\beta -1},\ \ \ \ x>0,\]
where $\gamma =c\theta $, $\theta >0$, $c>0$, $\alpha >0$ and $\beta >0$.

\bigskip
\noindent \textbf{Modified GLFR (MGLFR) distribution}

This distribution is proposed by
\cite{jamkhaneh-14}
 with the following pdf:
\[f_{{\rm M}{\rm GLFR}}\left(x\right)=\alpha {\left(a+b\beta x^{\beta -1}\right)e}^{-\left(ax+bx^{\beta }\right)}{\left(1-{\rm e}^{-\left(ax+bx^\beta\right)}\right)}^{\alpha -1},\ \ \ \ x>0,\]
where $a>0$, $b>0$, $\alpha >0$ and $\beta >0$.

\bigskip
\noindent\textbf{ Poisson GLFR (PGLFR) distribution}

This distribution is proposed by
\cite{co-or-le-15}
 with the following pdf:
\[f_{{\rm P}{\rm GLFR}}\left(x\right)=\frac{\beta \alpha {\rm g}\left(x\right)}{\left(1-{\rm e}^{-\beta }\right)}{\left(G(x)\right)}^{\alpha -1}{\rm e}^{-\beta {\left(G(x)\right)}^{\alpha }},\ \ \ \ x>0,\]
where $\alpha >0$ and $\beta >0$.

\bigskip
\noindent\textbf{Geometric GLFR (GGLFR)} \textbf{distribution}

This distribution is proposed by
\cite{na-sh-re-14}
with the following pdf:
\[f_{{\rm G}{\rm GLFR}}\left(x\right)=\frac{\alpha \left(1-\beta \right){\rm g}\left(x\right){\left(G(x)\right)}^{\alpha -1}}{{\left(1-\beta \left(1-{\left(G(x)\right)}^{\alpha }\right)\right)}^2},\ \ \ \ x>0,\]
where $\alpha >0$ and $\beta <1$.

\bigskip
\noindent\textbf{Generalized Linear Exponential (GLE)} \textbf{distribution}

This distribution is proposed by
\cite{Ma-al-10}
 with the following pdf:
\[f_{{\rm G}{\rm GLFR}}\left(x\right)=\alpha \left(a+bx\right){\left(ax+\frac{b}{2}x^2-\beta \right)}^{\alpha -1}{\rm e}^{-{\left(ax+\frac{b}{2}x^2-\beta \right)}^{\alpha }},\ \ \ \ x>0,\]
where $a>0$, $b>0$, $\alpha >0$ and $\beta >0$.

We also fitted these distributions to the data set, and obtained the MLE's, $-{\log  (L)\ }$ and other criteria for them. The results are given in Table \ref{tab.ex2}. It can be concluded that the EGLFR, EGR and EGE have better  fit than these seven extension of GLFR distribution for this data set. The histogram of the data set with the estimated pdf's  and the empirical cdf of the data set with estimated cdf's for distributions are given in  Figure \ref{fig.ex2}.

\begin{table}

\caption{ MLE's of the model parameters, and  the K-S, AIC, AICC, BIC, and LRT statistics.}\label{tab.ex}
\begin{center}
\begin{tabular}{|l|c|c|c|c|c|c|c|} \hline
 & \multicolumn{7}{|c|}{Distribution} \\ \hline
Statistic       & EGLFR  & EGE    & EGR & GLFR   & GE      & GR &  BMW\\ \hline
$\hat{\alpha }$ &             0.2620  & 0.5368 &0.2590& 0.5327 & 0.7798  & 0.3520& 0.0002 \\
$\hat{\beta }$  &             4.5000  & 1.8199 &4.2100& ---    & ---     & ---& 0.0541\\
$\hat{a}$      &$1.21\times {10}^{-8}$& 0.0064 & ---    & 0.0038 & 0.0187&---&0.1975 \\
$\hat{b}$       &  0.00006            & ---    &0.00006 & 0.0003 & ---   &0.0003 &0.1647\\
$\hat{\gamma}$  & ---                 & ---    &---     & ---    & ---   & ---&1.3771\\\hline
$-{\log  (L)\ }$& 173.9487           &189.1973&180.5367&233.1447&239.9951&234.5655&220.6601\\
K-S             & 0.0981             & 0.1558  &0.0872 & 0.1832 & 0.2042 &0.2011&0.0846 \\
p-value (K-S)   & 0.7215             & 0.1763  &0.8413 & 0.0696& 0.0309 &0.0350 &0.3971 \\
AIC             & 355.8974          &384.3945&367.0733&472.2895&483.9903 &473.1309&451.3201\\
AICC            & 356.7863          &384.9163&367.5951&472.8110&484.2456 &473.3862&452.6838\\
BIC             & 363.5455          & 390.1306&372.8094&478.0256&487.8143&476.9550&460.8802\\
LRT             & ---                 & 30.4971&13.1759 &118.3921&132.0929 &121.2335&---\\
p-value (LRT)   & ---                 & 0.0000  &0.0002  &0.0000  &0.0000   &0.0000&--- \\ \hline
\end{tabular}
\end{center}
\end{table}

\begin{figure}
\centering
\includegraphics[width=7.7cm,height=7.7cm]{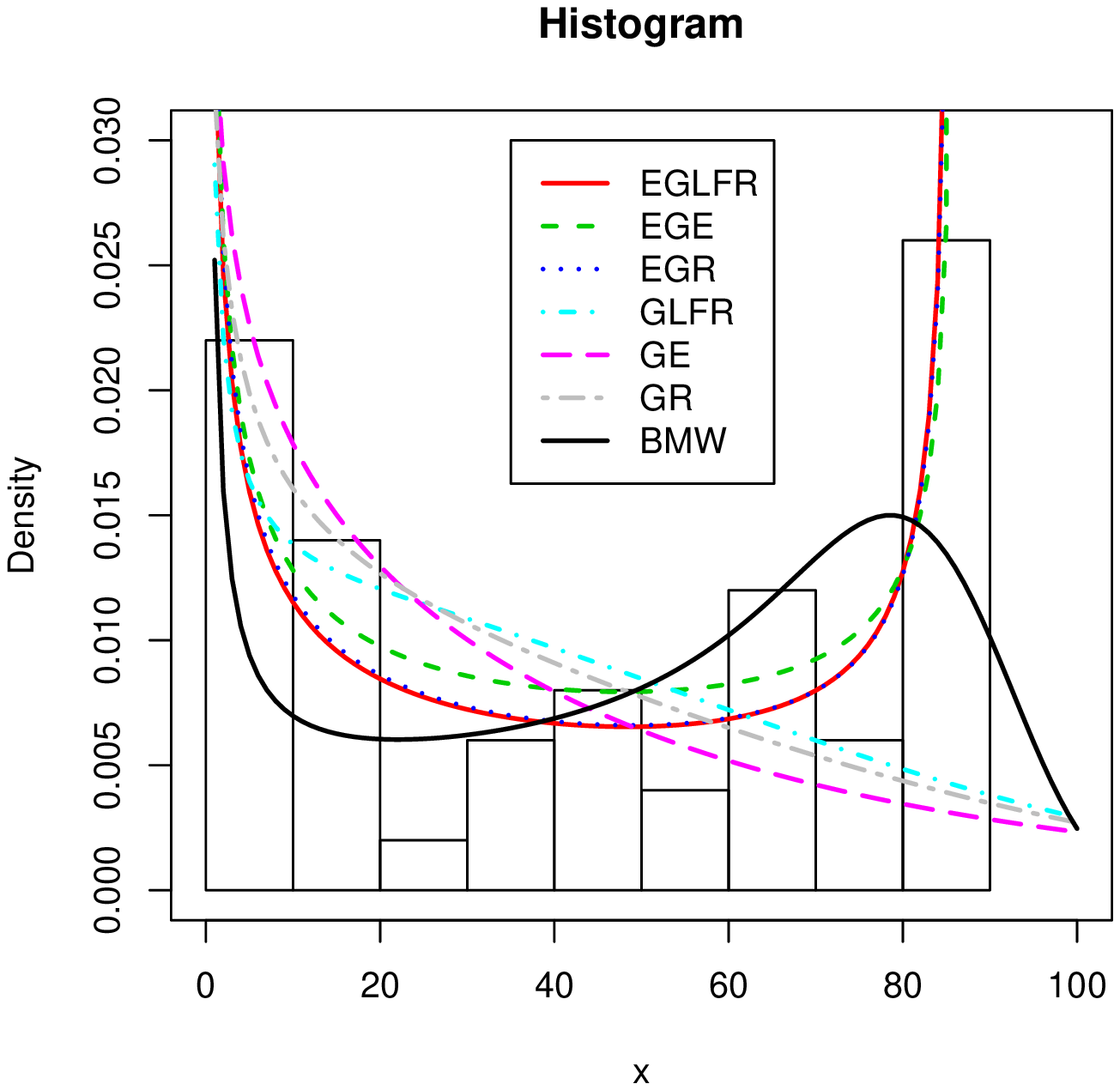}
\includegraphics[width=7.7cm,height=7.7cm]{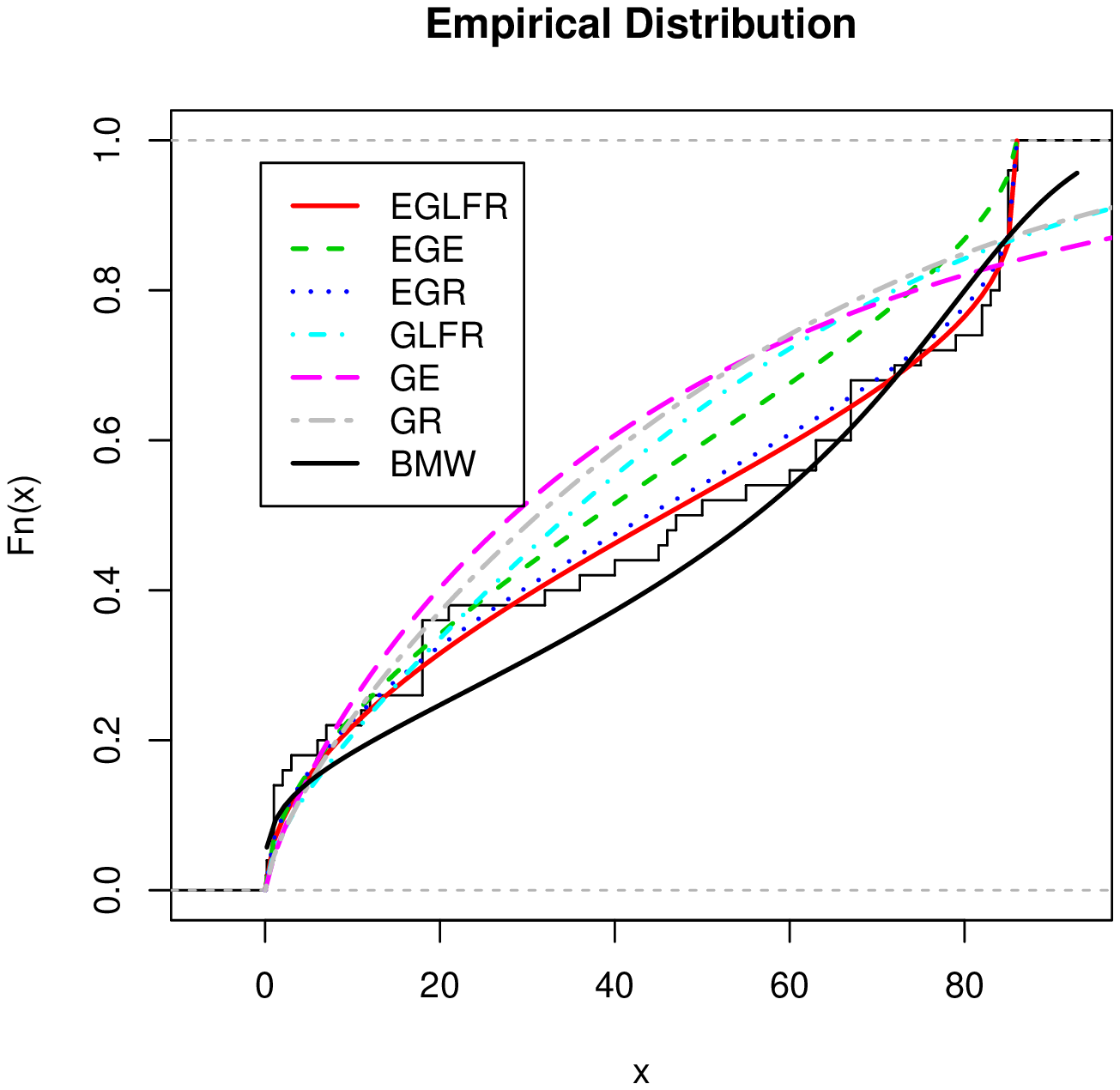}
\vspace{-0.9cm}
\caption{The histogram of the data set with the estimated pdf's (left); the empirical cdf of the data set and estimated cdf's (right). }\label{fig.ex1}

\end{figure}

\begin{table}

\caption{MLEs of the model parameters, and the K-S (and its p-value), AIC, AICC, BIC for other extensions of GLFR distribution.}\label{tab.ex2}
\begin{center}
\begin{tabular}{|l|c|c|c|c|c|c|c|} \hline
 & \multicolumn{7}{|c|}{Distribution} \\ \hline
Statistic & BLFR & KGLFR & MCGLFR & MGLFR & PGLFR & GGLFR & GLE \\ \hline
$\hat{\alpha }$ & 0.3347 & 0.6525 & 0.0295 & 19699.45 & 0.5327 & 0.2624 & 0.6262 \\
$\hat{\beta }$ & 0.1243 & 0.0622 & $5.74\times {10}^8$ & 0.0164 & ${10}^{-8}$ & -5.5536 & 0.0015 \\
$\hat{a}$ & 0.0172 & 0.2988 & 0.0015 & 0.0246 & 0.0038 & 0.0086 & 0.0149 \\
$\hat{b}$ & 0.0035 & 0.0007 & $6.66\times {10}^{-5}$ & 8.8393 & 0.0003 & 0.0005 & 0.0005 \\
$\hat{\gamma} $ & --- & --- & 1.8936 & --- & --- & --- & --- \\ \hline
$-{\log  (L)\ }$ & 230.3785 & 238.0490 & 221.9929 & 235.3460 & 233.1447 & 229.9373 & 227.1663 \\
K-S & 0.1554 & 0.1666 & 0.1949 & 0.1624 & 0.1832 & 0.1297 & 0.2327 \\
p-value (K-S) & 0.1784 & 0.1246 & 0.0448 & 0.1428 & 0.0696 & 0.3694 & 0.0088 \\
AIC & 468.7570 & 484.0980 & 453.9858 & 478.6921 & 474.2895 & 467.8745 & 462.3327 \\
AICC & 469.6459 & 484.9869 & 455.3494 & 479.5810 & 475.1784 & 468.7634 & 463.2216 \\
BIC & 476.4051 & 491.7461 & 463.5459 & 486.3402 & 481.9376 & 475.5226 & 469.9808 \\ \hline
\end{tabular}
\end{center}
\end{table}

\begin{figure}
\centering
\includegraphics[width=7.7cm,height=7.7cm]{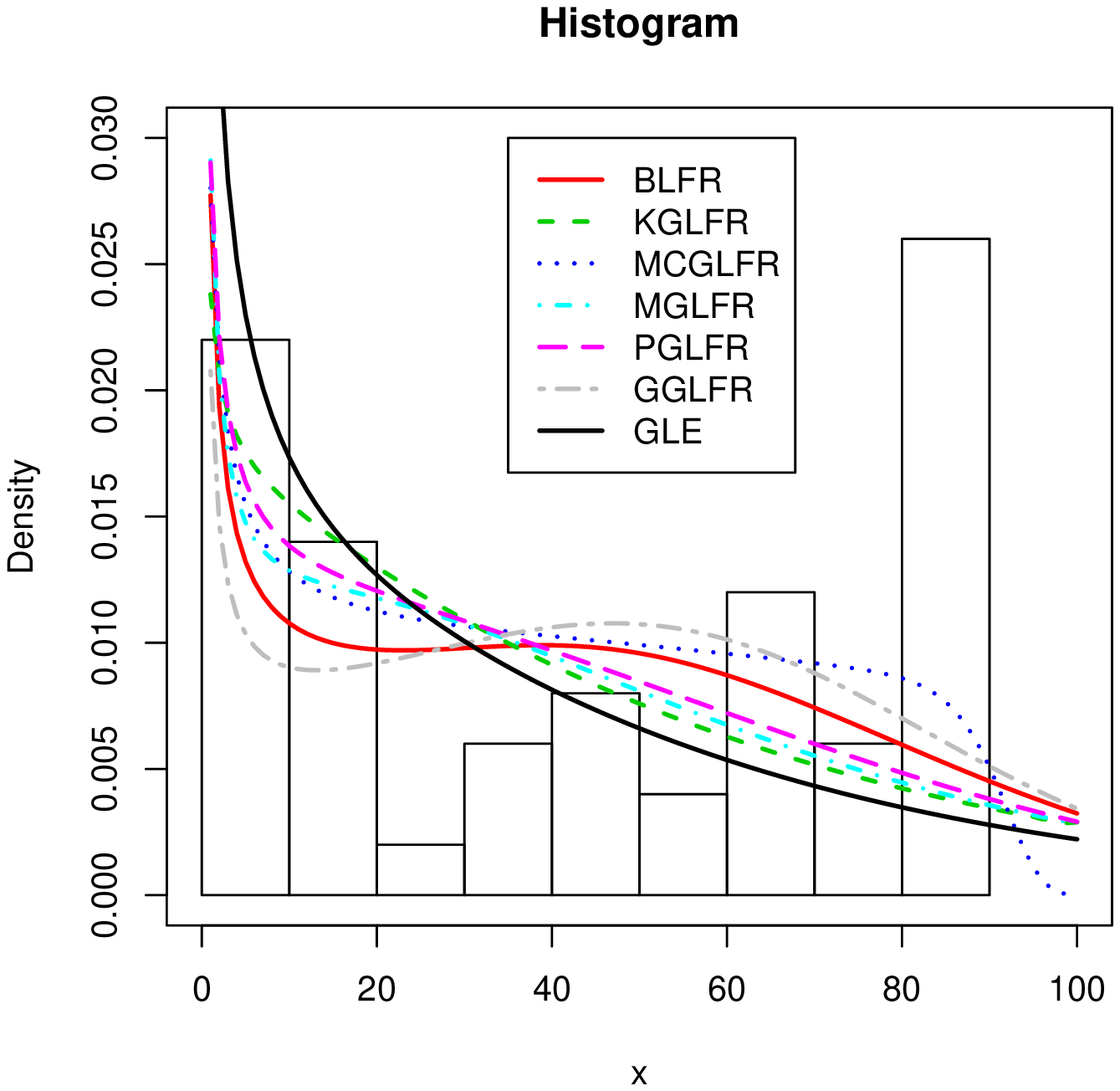}
\includegraphics[width=7.7cm,height=7.7cm]{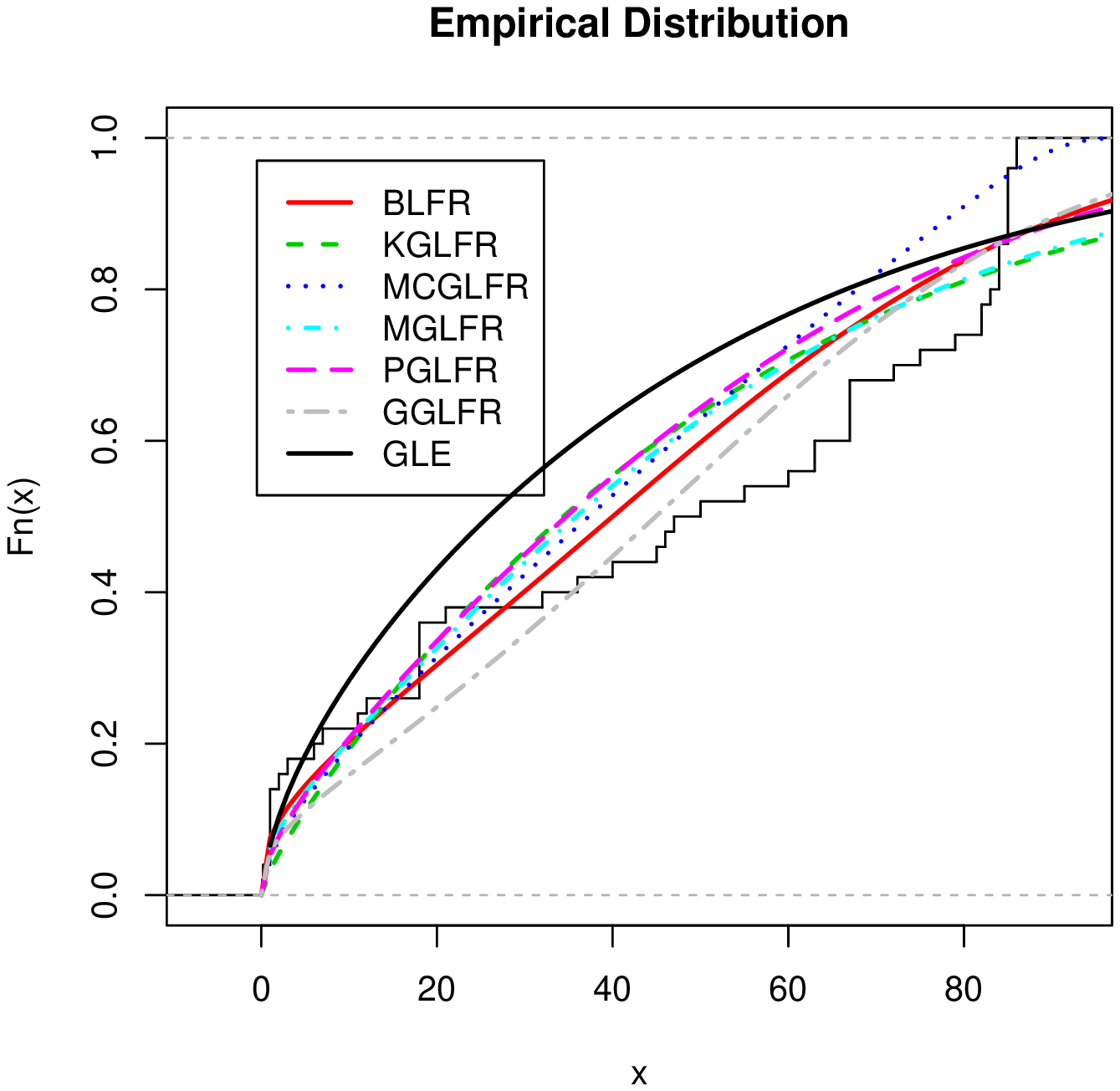}
\vspace{-0.9cm}

\caption{The histogram of the data set with the estimated pdf's (left); the empirical cdf of the data set and estimated cdf's (right) for seven extensions of GLFR distribution. }\label{fig.ex2}

\end{figure}

\section{Conclusion}
\label{sec.con}
In this paper, a four-parameter EGLFR distribution has been proposed. Although the GLFR distribution cannot have a unimodal hazard rate function, but EGLFR distribution can have a decreasing, increasing, unimodal, and bathtub-shaped hazard rate function depending on its parameters. This new class of distributions includes some well-known distribution such as EGE and GLFR distributions as special sub-models. Several properties of EGLFR distribution such as quantiles function, Skewness and kurtosis measures, and the $r$-th non-central moment have been given. the maximum likelihood estimators of the parameters are given and also, the asymptotic distribution of estimates are investigated. By using a real data set, we have shown that our new distribution fits to the lifetime data very well than the well-known distributions.
Recently
\cite{sa-ha-sm-ku-11}
introduced the bivariate GLFR distribution and its multivariate extension. So, one can use the approach used in this paper to develop the multivariate EGLFR distribution.

\section*{Acknowledgements}
The authors would like to thank the anonymous  referees  for many helpful comments and  suggestions.

\bibliographystyle{apa}

\end{document}